\documentclass[preprint,12pt]{elsarticle}

\journal{Discrete Applied Mathematics}

\usepackage{amsmath,amsthm}
\usepackage{amssymb}
\usepackage[utf8]{inputenc}
\usepackage[english]{babel}
\usepackage{enumitem}
\usepackage{mathtools}
\usepackage{verbatim}
\usepackage{parskip}
\usepackage{float}
\usepackage{tikz}
\usepackage{url}

\usepackage{pgfplots}

\tikzstyle{vertex}=[draw,thick,fill=white,circle,inner sep=2pt]

\newtheorem{theorem}{Theorem}
\newtheorem{lemma}[theorem]{Lemma}

\newtheorem{definition}[theorem]{Definition}

\usepackage[margin=1.1in]{geometry}
\usepackage{setspace}
\setstretch{1.3}

\usepackage[utf8]{inputenc}

\usepackage{lipsum}
\usepackage{amsfonts}
\usepackage{graphicx}
\usepackage{epstopdf}
\usepackage{algorithmic}
\ifpdf
  \DeclareGraphicsExtensions{.eps,.pdf,.png,.jpg}
\else
  \DeclareGraphicsExtensions{.eps}
\fi

\usepackage{amsopn}

\begin{document}

\begin{frontmatter}

\title{Constructing bounded degree graphs with prescribed degree and neighbor degree sequences}

\author{Uro\v{s} \v{C}ibej$^{1}$ and Aaron Li$^{2}$ and Istv\'an Mikl\'os$^{3,4}$ and Sohaib Nasir$^{5}$ and Varun Srikanth$^{6}$}
\address{$^{1}$University of Ljubljana, Faculty of Computer and Information Science, Ve\v{c}na pot 113, 1000 Ljubljana, Slovenia\\
$^2$University of Minnesota, 206 Church St. SE, Minneapolis, MN, USA, 55455\\
$^3$R\'enyi Institute, ELKH, 1053 Budapest, Re\'altanoda u. 13-15, Hungary \\
$^4$SZTAKI, ELKH, 1111 Budapest, L\'agym\'anyosi u. 11, Hungary\\
$^5$Vassar College, 124 Raymond Avenue, Poughkeepsie, New York 12604\\
$^6$Georgia Institute of Technology
North Avenue, Atlanta, GA 30332}
    
\begin{abstract}
Let $D = d_1, d_2, \ldots, d_n$ and $F = f_1, f_2,\ldots, f_n$ be two sequences of positive integers. We consider the following decision problems: is there a $i)$ multigraph, $ii)$ loopless multigraph, $iii)$ simple graph, $iv)$ connected simple graph, $v)$ tree, $vi)$ caterpillar $G = (V,E)$ such that for all $k$, $d(v_k) = d_k$ and $\sum_{w\in \mathcal{N}(v_k)} d(w) = f_k$ ($d(v)$ is the degree of $v$ and $\mathcal{N}(v)$ is the set of neighbors of $v$). Here we show that all these decision problems can be solved in polynomial time if $\max_{k} d_k$ is bounded.

The problem is motivated by NMR spectroscopy of hydrocarbons. 
\end{abstract}

\begin{keyword}
  Degree sequences \sep Neighbor degree constraint \sep NMR spectroscopy 
  
  \MSC 05C05 \sep \MSC 05C07 \sep \MSC 05C40 \sep \MSC 05C85 \sep \MSC 05C92 \sep \MSC 92E10
\end{keyword}

\end{frontmatter}
\maketitle

  

\section{Introduction}


In statistical testing of networks, a network from real life must be compared with a background distribution of random graphs. In such null models of random graphs, the degree sequence of the graphs is typically fixed. However, there are structural properties that are not preserved by the degree sequence. One example for such a property is the \emph{assortativity} \cite{stantonpinar}. A network is called \emph{assortative} if vertices tend to be connected to vertices with similar degrees, and it is called \emph{dissortative} if low degree vertices tend to connect to high degree vertices. The observation that real life networks with similar degree sequences might have different assortativity urged research to develop null models that preserve structural properties above the degree sequence. Such models are, for example, the joint degree matrix model \cite{stantonpinar,czabarkaetal} that prescribes the number of edges between vertex classes with given degrees and the \emph{$dk$-random graphs} that considers the distribution of induced sub-graphs up to a given size \cite{orsinietal}. While there are polynomial algorithms to construct a realization of a joint degree matrix \cite{czabarkaetal}, constructing a graph with prescribed small sub-graph distribution appears to be a hard problem. This motivated the question: ``What kind of local properties make the graph construction hard?" Erd{\H o}s and Mikl\'os showed that it is already NP-complete to ask if there exists a graph with given degree and neighbor degree sequence \cite{em2018}.

In this paper, we show that there exists a polynomial algorithm to construct graphs with prescribed degree and neighbor degree sequence if there is a bound on the maximum degree. We consider several variants of the problem: we might require that the graph be a simple graph, connected simple graph, tree, or caterpillar. Particular emphases are on the cases when the graph is connected as it also has an application of obtaining the chemical structure of hydrocarbons by NMR spectroscopy data.


Hydrocarbons are the simplest organic molecules consisting of only hydrogen and carbon atoms. A carbon atom makes four covalent bonds, while a hydrogen atom makes only one covalent bond. What follows is that hydrogen atoms can bond only to carbon atoms in hydrocarbons. Therefore, the information on the covalent bonds between carbon atoms completely describes the chemical structure of a hydrocarbon molecule. Indeed, if a carbon atom makes $k$ covalent bonds to other carbon atoms, then it must make $4-k$ bonds to hydrogen atoms. The diagram representing only the covalent bonds between the carbon atoms is a connected graph with maximum degree $4$. This graph is called the skeletal structure. A hydrocarbon might contain multiple bonds between two carbon atoms. Such a hydrocarbon is called unsaturated, and the skeletal structure is a connected, loopless multigraph in that case. When there are only single bonds between carbon atoms, the hydrocarbon is called saturated, and then the skeletal structure is a connected simple graph.

The chemical formula $C_nH_m$ means that there are $n$ carbon atoms and $m$ hydrogen atoms in a molecule. There are simple chemical measurements to obtain $n$ and $m$. When $m= 2n+2$, the skeletal structure is a tree. Indeed, in that case, there are $n-1$ covalent bonds between carbon atoms, and any connected graph with $n$ vertices and $n-1$ edges is a tree. As $m$ decreases, the number of covalent bonds between carbon atoms increases. This might be obtained by multiple bonds between carbon atoms (that is, we are talking about unsaturated hydrocarbons) or making cycles in the skeletal structure. Unsaturated hydrocarbons react with halogens while saturated hydrocarbons do not react, and that simple chemical reaction helps separate unsaturated hydrocarbons from saturated hydrocarbons with cycles in their skeletal structure. In conclusion, the chemical formula $C_nH_m$ is easy to obtain and it is easy to decide if the hydrocarbon is saturated or not as well.

More information on the chemical structure of a hydrocarbon can be attained from NMR spectroscopy. Roughly speaking, the position (frequency) of a peak in the NMR spectroscopy depends on the number of hydrogen atoms bonding to a particular carbon atom causing the peak, and the size of the peak tells the number of such carbon atoms. Furthermore, a peak (roughly, a Gaussian bell-shaped curve) is split depending on the number of hydrogen atoms bonding to neighbor carbon atoms. A peak is split into $l+1$ parts when there are altogether $l$ hydrogen atoms on the neighbor carbon atoms. From this information, the degree of a carbon atom as well as the sum of the degrees of the neighbor carbon atoms in the skeletal structure can be obtained. Indeed, if a peak related to $k$ hydrogen-carbon bonds is split into $l+1$ parts, then the degree of the carbon atom is $4-k$, and the sum of the degrees of the neighbor carbon atoms is $4\times(4-k) - l$.

This raises the following graph theoretical question: given degree sequences $D = d_1, d_2, \ldots, d_n$ and $F = f_1, f_2, \ldots, f_n$, is there a graph $G = (V,E)$, such that for all $k$, $d(v_k) =d_k$ and $\sum_{w\in \mathcal{N}(v_k)} d(w) = f_k$, where $d(v)$ is the degree of vertex $v$ and $\mathcal{N}(v)$ is the set of the neighbors of $v$? Erd{\H o}s and Mikl\'os showed that this decision problem is NP-complete in general \cite{em2018}. Here we consider a specific case: the maximum degree is $4$, and we are looking for connected realizations. We show that this specific case can be solved in polynomial time.

There is typically more than one graph with a given degree and neighbor degree sequence. Some of the solutions are chemically not stable due to spherical constraints. For example, it is known that the hydrocarbon whose skeletal structure would be the balanced unrooted ternary tree with $17 ( = 1 + 4 + 12)$ vertices is chemically not stable. In principle, molecules that have skeletal structure with larger diameter tend to be more stable. Among trees with given degree sequence, caterpillars have the maximum diameter. This motivates us to ask when a caterpillar with given degree and neighbor degree sequence exists.

The paper is structured as follows. In the Preliminaries, we give some definitions on graphs and also introduce the concept of labeled stub-stars. When each edge in a graph is cut into two half-edges (stubs), we get stub-stars. If each stub is labeled by the degree of the neighbor vertex incident to the edge whose cut resulted in the stub, the sum of these labels is the sum of the neighbor degrees.  In the next section, we give necessary and sufficient conditions when an ensemble of labeled stub-stars has graph realizations with certain properties. Our main interest is in the cases when the graph is a connected, simple graph or a tree or a caterpillar. However, for the sake of completeness, we also discuss the cases when the graph is a multigraph, a loopless multigraph, or a simple graph. After this, we show that for each of the six cases (multigraph, loopless multigraph, simple graph, simple connected graph, tree, caterpillar) a polynomial time solvable integer programming feasibility problem exists to find an ensemble of stub-stars that is consistent with a given degree and neighbor degree sequence and also satisfies the necessary conditions to have a graph realization with the given properties.
%

\section{Preliminaries}

First, we give some graph-theoretical definitions.

\begin{definition}
A graph $G = (V,E)$ is a  \emph{multigraph} if $E$ is a multiset of $V\times V$. $G$ is a \emph{loopless multigraph} if $E$ is a multiset of ${V\choose 2}$. $G$ is a \emph{simple graph} if $E$ is a subset of ${V\choose 2}$.

A  \emph{caterpillar} is a tree in which the non-leaf vertices form a path.
\end{definition}

\begin{definition}
A degree sequence $\{d_1, d_2,\ldots, d_n\}$ is \emph{graphical} if there exists a vertex labeled graph $G = (V,E)$ such that for each $v_i$, $d(v_i) = d_i$.

A pair of degree sequences (or \emph {bipartite degree sequence}) $(\{d_{1,1}, d_{1,2},\ldots, d_{1,n}\}\{d_{2,1}, d_{2,2},\ldots,d_{2,m}\})$ is \emph{graphical} if there exists a simple bipartite graph $G  = (U,V,E)$, such that for each $u_i$, $d_{u_i} = d_{1,i}$, and for each $v_j$, $d(v_j) = d_{2,j}$.

A degree sequence is \emph{forest realizable} if it has a forest realization.
\end{definition}

\begin{definition}
Let $D= d_1, d_2,\ldots d_n$ and $F = f_1, f_2, \ldots, f_n$ be a pair of degree and neighbor degree sequences. We say that $G = (V,E)$ is a \emph{realization} of $(D,F)$ if for all $k$, $d(v_k) = d_k$ and $\sum_{w\in \mathcal{N}(v_k)} d(w) = f_k$, where $\mathcal{N}(v)$ denotes the set of neighbors of $v$.
\end{definition}

We now introduce the main technical concept used in this paper: the labeled stub-stars.

\begin{definition}
A \emph{labeled stub-star} is a vertex with $d$ stubs (half edges). Each stub $s_k$ is labeled with $d_k$. Let $\mathcal{S} = \{S_1, S_2, \ldots, S_n\}$ be a multiset (we will also call it \emph{ensemble}) of labeled stub-stars. A vertex labeled  graph $G = (V,E)$ is a \emph{realization} of $\mathcal{S}$ if for each $v_i$, the neighbors of $v_i$ have degrees $d_{i,1}, d_{i,2},\ldots, d_{i,k}$, where $d_{i,1}, d_{i,2},\ldots d_{i,k}$ are the labels of the stub-star $S_i$. We say that $\mathcal{S}$ is multigraph, loopless multigraph, simple graph, connected simple graph, tree, caterpillar realizable, respectively, if it has a realization which is a multigraph, loopless multigraph, simple graph, connected simple graph, tree, caterpillar, respectively.
\end{definition}

\begin{definition}
The \emph{chromatic degree $d_{(i,j)}$} of a labeled stub-star $S$ is $0$ if the degree of $S$ is neither $i$ nor $j$. If its degree is $i$, then $d_{(i,j)}(S)$ is the number of times $S$ contains label $j$. Finally, if the degree of $S$ is $j$, then $d_{(i,j)}(S)$ is the number of times $S$ contains label $i$.

For an ensemble of labeled stub-stars $\mathcal{S} \{S_1, S_2, \ldots, S_n\}$, we define a degree sequence 
$$
D_{i,i}(\mathcal{S}) := d_{(i,i)}(S_1), d_{(i,i)}(S_2), \ldots, d_{(i,i)}(S_n).
$$

We define the bipartite degree sequence $D_{i,j}(\mathcal{S})$ for all $i\ne j$ similarly. The chromatic degrees $d_{(i,j)}(S_k)$ are naturally arranged into the two degree sequences according to the degree of $S_k$ if it is either $i$ or $j$. For other degrees of $S_k$, the corresponding chromatic degree is $0$, and it can be arbitrarily added to any degree sequence in $D_{i,j}(\mathcal{S})$.
We can also talk about the \emph{monochromatic sub-graphs} in a realization of an ensemble of labeled stub-stars: the monochromatic sub-graph with color $(i,j)$ contains the edges that connects vertices with degree $i$ and $j$.
 \end{definition}

The labels of a labeled stub-star form a partition. Below we give the necessary definitions and notations for partitions used in this paper.
\begin{definition}
We will denote by $\lambda \dashv n$ that $\lambda = \{\lambda_1, \lambda_2, \ldots, \lambda_k\}$ is a partition of a positive integer $n$. That is, each $\lambda_i$ is a positive integer, and it holds that
$$
\sum_{i=1}^k \lambda_i = n.
$$
The \emph{height} of $\lambda = \{\lambda_1, \lambda_2, \ldots, \lambda_k\}$ is $k$, and is denoted by $h(\lambda)$. The \emph{size} of $\lambda$ is $n$, and is denoted by $|\lambda|$. Finally, $N(\lambda,j)$ denotes how many times $j$ appear in $\lambda$, that is
$$
N(\lambda,j) := |\{\lambda_i \in \lambda| \lambda_i = j\}|.
$$

\end{definition}

If $\lambda$ comes from the labels of a stub-star, then $h(\lambda)$ corresponds to the degree of the stub-star while $|\lambda|$ corresponds to the neighbor degree sum.

We are going to solve the graph realization problems defined above via integer programming feasibility problems that we are going to introduce now.
\begin{definition}
The \emph{integer programming feasibility problem} $(\mathcal{X},\mathcal{L})$ consists of a set of indeterminates, $\mathcal{X}$ and a linear inequality system $\mathcal{L}$ whose indeterminates are from $\mathcal{X}$ and
asks if there is an integer assignment to $\mathcal{X}$ that satisfies $\mathcal{L}$.
\end{definition}

Some of the inequalities we are going to use contain the function $\min\{x,y\}$ or $\max\{x,y\}$. These can be expressed as linear inequality systems if an upper bound for $|x-y|$ is available. The following lemma is well-known in operational research, but for the sake of completeness, we give a proof here.

\begin{lemma}\label{lem:express-min-max}
Let $x$, $y$ and $M$ be integer numbers satisfying
$$
M   \ge |x-y|,
$$ and let $z$ and $b$ be indeterminates. Then the following inequality system
\begin{eqnarray}
z &\le & x \label{eq:lemma2.1.1}\\
z &\le & y \label{eq:lemma2.1.2}\\
z & \ge & x - Mb \label{eq:lemma2.1.3}\\
z & \ge & y- M(1-b)\label{eq:lemma2.1.4}\\
0 & \le & b\\
b & \le & 1 
\end{eqnarray}
has at most two integer solutions, and in every case $z = \min\{x,y\}$.

Similarly, the following inequality system
\begin{eqnarray}
z &\ge & x \\
z &\ge & y \\
z & \le & x + Mb \\
z & \le & y+ M(1-b)\\
0 & \le & b\\
b & \le & 1 
\end{eqnarray}
has at most two integer solutions, and in every case $z = \max\{x,y\}$.

\end{lemma}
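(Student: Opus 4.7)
The plan is to exploit the fact that the constraints $0 \le b \le 1$ together with integrality force $b \in \{0,1\}$, and then simply case-split. The only real content is that the "big-$M$" terms must be large enough to deactivate the constraints they are attached to in whichever case they should be deactivated, and the hypothesis $M \ge |x-y|$ is exactly what makes this work.

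For the minimum statement, I would first set $b = 0$. Then (\ref{eq:lemma2.1.3}) becomes $z \ge x$, which combined with (\ref{eq:lemma2.1.1}) forces $z = x$. Inequality (\ref{eq:lemma2.1.2}) then requires $x \le y$, and (\ref{eq:lemma2.1.4}) reduces to $x \ge y - M$, which holds because $y - x \le |x-y| \le M$. So $b=0$ yields a feasible solution exactly when $x \le y$, and in that case $z = x = \min\{x,y\}$. Symmetrically, $b = 1$ reduces (\ref{eq:lemma2.1.4}) to $z \ge y$, which together with (\ref{eq:lemma2.1.2}) pins $z = y$; feasibility then requires $y \le x$, and (\ref{eq:lemma2.1.3}) becomes $y \ge x - M$, again guaranteed by $M \ge |x-y|$. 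Thus $b=1$ yields a feasible solution exactly when $y \le x$, with $z = y = \min\{x,y\}$. In every feasible integer assignment $z = \min\{x,y\}$, and a second solution exists only in the tie case $x = y$, giving the "at most two" count.

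For the maximum statement, I would run the same argument with the inequalities reversed. With $b = 0$, the constraint $z \le x + Mb$ forces $z = x$, requires $y \le x$, and the remaining bound $z \le y + M$ is guaranteed by $M \ge |x-y|$; with $b = 1$, symmetrically $z = y$ and $x \le y$. Again each case gives exactly one value of $z$, the two cases coincide precisely when $x = y$, and in all feasible integer assignments $z = \max\{x,y\}$.

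I do not anticipate any real obstacle: the only place the hypothesis $M \ge |x-y|$ is used is to ensure that the "deactivated" big-$M$ inequality is automatically satisfied, and this is a direct inequality check in each of the two cases. The write-up is essentially bookkeeping across the two values of $b$.
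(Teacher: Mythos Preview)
Your proof is correct and follows essentially the same approach as the paper: both arguments note that $b\in\{0,1\}$ and then case-split, with the paper organizing the cases by whether $x<y$, $y<x$, or $x=y$, while you organize by the value of $b$; the underlying reasoning is identical.
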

\begin{proof}
We prove the $\min\{x,y\}$ case. Observe that $b$ is either $0$ or $1$. If $x < y$, then there is no solution with $b = 1$ since the inequalities in equations~\ref{eq:lemma2.1.1}~and~\ref{eq:lemma2.1.4} would contradict to each other. On the other hand, $b = 0$ and $z = x$ is a solution, and the only solution since $x\le z \le x$ must hold.

Similarly, when $y < x$, the only solution is $z = y$, $b = 1$. Finally, if $x=y$, then $z = x ( = y)$ and $b \in\{0,1\}$.

The proof for $\max\{x,y\}$ goes analogously.
\end{proof}

\section{Realizations of a set of labeled stub-stars}

In this section, we state and prove theorems on realizations of labeled stub-stars. We need the well-known Erd{\H o}s-Gallai and Gale-Ryser inequalities.

\begin{theorem}[Erd{\H o}s-Gallai, \cite{eg}]
Let $D = d_1\ge d_2 \ge \ldots \ge d_n$ be a degree sequence. Then $D$ is simple graph realizable if and only if the sum of the degrees is even, and for all $k$, the inequality
\begin{equation}
\sum_{g=1}^k d_g \le k(k-1) + \sum_{h=k+1}^n \min\{k,d_h\} \label{eq:eg}
\end{equation}
holds.
\end{theorem}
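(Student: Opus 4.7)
The plan is to prove necessity and sufficiency separately, since they require quite different arguments.

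For \textbf{necessity}, I would proceed by a standard double-counting. Given any simple graph realization $G$ of $D$ and any index $k$, let $A = \{v_1, \ldots, v_k\}$ be the top-$k$ vertices. The sum $\sum_{g=1}^k d_g$ counts each edge inside $A$ twice---contributing at most $2\binom{k}{2} = k(k-1)$---and each edge from $A$ to $V \setminus A$ exactly once. For the latter, each $v_h$ with $h > k$ contributes at most $\min\{k, d_h\}$ because it has $d_h$ neighbors in total and at most $k$ of them can lie in $A$. Summing these bounds gives the stated inequality, and the parity condition follows from $\sum d_i = 2|E(G)|$.

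For \textbf{sufficiency}, I would use strong induction on $\sum d_i$, with the edgeless graph as the base case. In the inductive step, I would invoke a Havel--Hakimi style reduction: form $D'$ from $D$ by removing $d_1$ and subtracting $1$ from each of $d_2, \ldots, d_{d_1+1}$, then resorting in nonincreasing order. A standard edge-swap on any realization of $D'$ shows that $D'$ is realizable if and only if $D$ is. The heart of the argument is then to verify that $D'$ still satisfies the Erd\H{o}s--Gallai inequalities; granted this, the induction hypothesis yields a realization of $D'$, and adjoining $v_1$ with its prescribed neighbors produces a realization of $D$. Before applying the reduction, one should check that $d_{d_1+1} \ge 1$, which follows from the Erd\H{o}s--Gallai inequality at $k=1$, so that all decremented entries remain nonnegative.

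The \textbf{main obstacle} is precisely the verification that the Erd\H{o}s--Gallai inequalities are inherited by $D'$, since resorting $D'$ can shuffle which vertices fall inside the $k$-th prefix. The cleanest way I would attempt this is a case split on whether $k < d_1$ or $k \ge d_1$, carefully tracking how the prefix sums and the $\min\{k, d_h\}$ tail terms change after the reduction. If this bookkeeping becomes unwieldy, I would instead follow the short proof of Tripathi and Vijay: reduce to checking Erd\H{o}s--Gallai only at those indices $k$ with $d_k > d_{k+1}$ (or $k=n$), and at the largest violating $k$ construct a realization directly by a controlled degree-shifting argument, bypassing Havel--Hakimi entirely.
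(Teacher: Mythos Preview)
The paper does not prove this statement: the Erd\H{o}s--Gallai theorem is stated with a citation to \cite{eg} and invoked as a known tool, with no proof environment following it. There is therefore no ``paper's own proof'' to compare against.

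That said, your sketch is a standard and workable route to the classical result. The necessity argument by double counting is correct as written. For sufficiency, combining the Havel--Hakimi reduction with an inductive verification that $D'$ inherits the Erd\H{o}s--Gallai inequalities is one of the known proofs, and you have correctly identified the only genuinely delicate step: tracking the prefix sums and tail terms after $D'$ is resorted. Your fallback to the Tripathi--Vijay style argument is also sound. One small addendum: the $k=1$ inequality that you use to get $d_{d_1+1}\ge 1$ simultaneously forces $n\ge d_1+1$, so the reduction is well defined in the first place.
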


\begin{theorem}[Gale-Ryser, \cite{gale,ryser}]
Let $D = (\{d_{1,1}\le d_{1,2}\le \ldots \le d_{1,n}\},\{d_{2,1}, d_{2,2},\ldots,d_{2,m}\})$ be a bipartite degree sequence. Then $D$ is bipartite graph realizable if and only if
$$
\sum_{g=1}^n d_{1,g} = \sum_{h=1}^m d_{2,h}
$$
and for each $k$, the inequality
\begin{equation}
\sum_{i=g}^k d_{1,g} \le \sum_{h=1}^m \min\{k,d_{2,h}\} \label{eq:gr}
\end{equation}
holds.
\end{theorem}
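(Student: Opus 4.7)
The plan is to handle necessity and sufficiency separately, in the standard way in which Gale--Ryser is proved in the literature.

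For necessity, double counting suffices. The sum condition $\sum_g d_{1,g} = \sum_h d_{2,h}$ just expresses that both sides count the edges of any bipartite realization. For inequality (\ref{eq:gr}), I would look at the $k$ left-vertices of largest degree: the total number of edges incident to this set equals the sum of their degrees, while each right vertex $w_h$ contributes at most $\min\{k, d_{2,h}\}$ such edges, because it has $d_{2,h}$ neighbors in total, of which at most $k$ can lie in a prescribed $k$-subset of the left side.

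For sufficiency, I would use induction on the total degree $\sum_g d_{1,g}$ via a Havel--Hakimi--style greedy step. Select a left vertex $u$ of maximum degree $d$, and connect it to the $d$ right vertices of largest remaining degree; then delete $u$ and decrement those $d$ right-degrees by one. If the reduced bipartite sequence still satisfies the Gale--Ryser conditions, the inductive hypothesis furnishes a realization of it, to which reattaching $u$ yields the desired realization of the original pair.

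The hard part, as always in Havel--Hakimi--type arguments, will be verifying that this greedy reduction preserves the Gale--Ryser inequalities. A swap argument does it: if some realization did not already respect the greedy choice, one could identify a pair where $u$ is connected to a low-degree right vertex $w$ in place of some higher-degree $w'$, find a different left vertex $u'$ adjacent to $w'$ but not $w$ (such a $u'$ must exist by a degree comparison), and perform the alternating swap $\{uw, u'w'\} \mapsto \{uw', u'w\}$, which preserves all degrees and strictly increases the agreement with the greedy choice. A cleaner alternative would be to pass to the conjugate partition: setting $b^*_k := |\{h : d_{2,h}\ge k\}|$, one checks that $\sum_h \min\{k, d_{2,h}\} = \sum_{i=1}^k b^*_i$, so the Gale--Ryser inequality becomes the majorization statement that the conjugate of $d_2$ dominates $d_1$; the theorem then follows from the classical fact that majorization by the conjugate partition characterizes bipartite realizability, provable by a direct row-by-row filling of the $\{0,1\}$-biadjacency matrix.
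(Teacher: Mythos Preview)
The paper does not prove this theorem at all: it is stated with citations to Gale and Ryser as a classical result and is used as a black box later on. So there is no ``paper's own proof'' to compare against.

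That said, your outline is a correct and standard proof of Gale--Ryser. The necessity argument by double counting is exactly right. For sufficiency, both routes you sketch are valid: the Havel--Hakimi greedy reduction with the swap exchange argument works, and the conjugate-partition reformulation (rewriting $\sum_h \min\{k, d_{2,h}\}$ as $\sum_{i=1}^k b^*_i$ and reducing to a majorization statement, then filling the $\{0,1\}$-biadjacency matrix row by row) is cleaner and is essentially Ryser's original approach. One small slip: in your swap argument you write ``connected to a low-degree right vertex $w$ in place of some higher-degree $w'$'' and then look for $u'$ adjacent to $w'$ but not $w$; the existence of such a $u'$ follows because $d(w') \ge d(w)$ while $u \in N(w)\setminus N(w')$, so $N(w') \not\subseteq N(w)$. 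You state this correctly but might want to make the degree comparison explicit when writing it out.
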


When the degree sequences have a maximum degree $\Delta$, then only the first $\Delta$ Erd{\H o}s-Gallai and the first $\Delta-1$ Gale-Ryser inequalities have to be checked as the following lemma states.

\begin{lemma}
Let $D = d_1\ge d_2 \ge \ldots \ge d_n$ be a degree sequence. Then for each $k > d_1$, the inequality in equation~(\ref{eq:eg}) holds. Similarly, let $D = (\{d_{1,1}\ge d_{1,2}\ge \ldots \ge d_{1,n}\},\{d_{2,1}, d_{2,2},\ldots,d_{2,m}\})$ be a bipartite degree sequence. Then for all $k\ge \max_{j}\{d_{2,j}\}$, the inequality in equation~(\ref{eq:gr}) holds.
\end{lemma}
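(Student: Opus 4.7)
The plan is to observe that both claims follow almost immediately from the hypothesis on $k$ together with the monotonicity of the first sequence, so I would present them as two short direct calculations.

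For the Erd\H os--Gallai half, I would first use non-increasingness to get $d_g \le d_1$ for every $g \in \{1,\dots,k\}$, and then invoke $k > d_1$ together with integrality of degrees to upgrade this to $d_g \le k - 1$. Summing over $g = 1, \dots, k$ gives $\sum_{g=1}^k d_g \le k(k-1)$, which is already bounded above by the right-hand side of (\ref{eq:eg}) since $\sum_{h=k+1}^n \min\{k, d_h\}$ is a sum of non-negative terms. That is the whole argument; there is no need to analyze the minimum further.

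For the Gale--Ryser half, set $\Delta_2 := \max_j d_{2,j}$. The key observation is that as soon as $k \ge \Delta_2$, every $\min\{k, d_{2,h}\}$ collapses to $d_{2,h}$, so the right-hand side of (\ref{eq:gr}) becomes $\sum_{h=1}^m d_{2,h}$. From there I would combine the trivial bound $\sum_{g=1}^k d_{1,g} \le \sum_{g=1}^n d_{1,g}$ with the degree-sum equality $\sum_{g=1}^n d_{1,g} = \sum_{h=1}^m d_{2,h}$ (the first Gale--Ryser condition, which is the implicit standing assumption here) to conclude that the inequality holds.

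Neither half is really difficult; the only pitfall is conceptual rather than computational. In the Gale--Ryser case the claim is meaningful only once the degree-sum equality is imposed, since otherwise a sequence with $\sum d_{1,g} > \sum d_{2,h}$ would violate (\ref{eq:gr}) already at $k = n \ge \Delta_2$. I would therefore state at the outset that we work under the sum condition, after which both parts become one-line estimates. The ``main obstacle,'' such as it is, is just being precise about which hypotheses are in force.
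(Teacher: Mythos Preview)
Your proposal is correct and follows essentially the same route as the paper: for the Erd\H{o}s--Gallai part both you and the paper bound $\sum_{g=1}^k d_g \le k d_1 \le k(k-1)$ and discard the nonnegative tail, and for the Gale--Ryser part both collapse $\min\{k,d_{2,h}\}$ to $d_{2,h}$ and then invoke the degree-sum equality. Your explicit remark that the sum condition must be assumed is a useful clarification that the paper leaves implicit.
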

\begin{proof}
First we consider the Erd{\H o}s-Gallai inequalities. If $k > d_1$, then it holds that
$$
\sum_{i=1}^k d_i \le k\times d_1 \le k(k-1) \le k(k-1) + \sum_{j=k+1}^n \min\{k,d_j\}.
$$
Regarding the Gale-Ryser inequalities, if $k \ge \max_{j}\{d_{2,j}\}$, then it holds that
$$
\sum_{i=1}^k d_{1,i} \le \sum_{j=1}^n d_{2,j} \le \sum_{j=1}^m \min\{k,d_{2,j}\}.
$$
\end{proof}

\begin{theorem}\label{theo:multi-simple-realizations}
Let $\mathcal{S} = S_1, S_2, \ldots, S_n$ be an ensemble of labeled stub-stars with maximum degree $\Delta$.
Let $x_{\lambda}$ denote the number of labeled stub-stars in $\mathcal{S}$ whose labels form the partition $\lambda$. Then
$\mathcal{S}$ is
\begin{enumerate}[label=(\alph*)]
\item multigraph
\item loopless multigraph
\item simple graph
\end{enumerate}
realizable if and only if 
\begin{enumerate}[label=(\alph*)]
\item \label{case:a} for each $i$, 
\begin{equation}
    \sum_{\lambda | h(\lambda) = i} x_{\lambda} N(\lambda, i) \equiv 0 \,\,\,\,\, \pmod{2}\label{eq:multigraph-condition}
\end{equation}
and for each $i\neq j$, 
\begin{equation}
    \sum_{\lambda | h(\lambda) = i} x_{\lambda}N(\lambda,j) = \sum_{\lambda' | h(\lambda') = j} x_{\lambda'}N(\lambda',i).\label{eq:bipartite-multigraph-condition}
\end{equation}
\item the conditions in case~\ref{case:a} holds and for each $i$, 
\begin{equation}
2 \max_{\lambda | h(\lambda) = i} \left\{\min\{1,x_{\lambda}\} N(\lambda,i)\right\} \le \sum_{\lambda'|h(\lambda') = i} x_{\lambda'} N(\lambda,i).\label{eq:loopless-multigraph-condition}
\end{equation}
\item the conditions in case~\ref{case:a} holds, for each $i$, the first $\Delta$ Erd{\H o}s-Gallai inequalities hold for $D_{i,i}(\mathcal{S})$, and for each $i\neq j$, the first $\Delta-1$ Gale-Ryser inequalities hold for the bipartite degree sequence $D_{i,j}(\mathcal{S})$.
\end{enumerate}
\end{theorem}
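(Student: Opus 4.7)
The plan is to exploit a natural edge-decomposition of any realization into classes indexed by the unordered pair of endpoint-degrees: an edge between a degree-$i$ vertex and a degree-$j$ vertex consumes exactly one $j$-labeled stub on the degree-$i$ side and (if $i \ne j$) one $i$-labeled stub on the degree-$j$ side, while a loop on a degree-$i$ vertex consumes two $i$-labeled stubs on that vertex. Hence a realization of $\mathcal{S}$ is equivalent to specifying, for each $i$, a graph $H_i$ on the degree-$i$ stars whose vertex degrees are given by the chromatic degree sequence $D_{i,i}(\mathcal{S})$, together with, for each $i<j$, a bipartite graph $H_{i,j}$ with bipartite degree sequence $D_{i,j}(\mathcal{S})$. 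Since every stub lies in exactly one such class, these subproblems are independent, and $\mathcal{S}$ is realizable precisely when all of them are, with the appropriate simple/multi/loopless property.

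For necessity of case~\ref{case:a}, count stubs: every edge in $H_i$ contributes $2$ to $\sum_\lambda x_\lambda N(\lambda,i)$, forcing~(\ref{eq:multigraph-condition}); every edge in $H_{i,j}$ contributes $1$ to each side of~(\ref{eq:bipartite-multigraph-condition}), forcing that equality. For case~(b), a loopless-multigraph realization of any degree sequence $(d_1, \dots, d_n)$ requires $d_k \le \sum_{\ell \ne k} d_\ell$ for every $k$, equivalently $2\max_k d_k \le \sum_k d_k$; applied to $D_{i,i}(\mathcal{S})$ this is exactly~(\ref{eq:loopless-multigraph-condition}), the factor $\min\{1, x_\lambda\}$ simply restricting the maximum to partition types $\lambda$ that actually occur in $\mathcal{S}$. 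For case~(c), a simple realization of $H_i$ demands the Erd\H{o}s--Gallai inequalities on $D_{i,i}(\mathcal{S})$ and a simple realization of $H_{i,j}$ demands the Gale--Ryser inequalities on $D_{i,j}(\mathcal{S})$, which by the preceding lemma may be truncated to the first $\Delta$ and first $\Delta-1$ inequalities respectively.

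For sufficiency I would run the decomposition in reverse. Under~(\ref{eq:bipartite-multigraph-condition}) each $H_{i,j}$ exists as a bipartite multigraph trivially (for cases~(a),~(b)) or as a simple bipartite graph by Gale--Ryser (for case~(c)). Under~(\ref{eq:multigraph-condition}) each $H_i$ exists as a multigraph with loops by arbitrary stub-pairing (for case~(a)); under additionally~(\ref{eq:loopless-multigraph-condition}) it exists as a loopless multigraph by the classical Hakimi construction (for case~(b)); and under the Erd\H{o}s--Gallai inequalities it exists as a simple graph (for case~(c)). Assembling these independently constructed subgraphs on the common vertex set $\{S_1, \dots, S_n\}$ yields a realization of $\mathcal{S}$.

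The one step I expect to need a careful argument is verifying that the assembled graph does not acquire a forbidden feature that none of its pieces has. This is automatic: any two parallel edges with endpoints $\{u,v\}$ lie in the single class indexed by $(d(u), d(v))$, so ruling out multi-edges class-by-class rules them out globally; similarly, every loop lives in a monochromatic class, and in cases~(b) and~(c) those classes are constructed loop-free. Once this observation is in place, the three parts of the theorem reduce cleanly to three standard degree-sequence realization theorems applied to the chromatic degree sequences $D_{i,i}$ and $D_{i,j}$.
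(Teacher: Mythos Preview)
Your proposal is correct and follows essentially the same approach as the paper: decompose a realization into monochromatic subgraphs indexed by endpoint-degree pairs $(i,j)$, observe that the stated conditions are precisely the classical realizability criteria (parity/equal sums, Hakimi's loopless condition, Erd\H{o}s--Gallai/Gale--Ryser) applied to each $D_{i,i}(\mathcal{S})$ and $D_{i,j}(\mathcal{S})$, and for sufficiency assemble independently constructed monochromatic pieces. Your explicit remark that parallel edges and loops cannot appear across distinct color classes is exactly the paper's closing observation in case~(c).
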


\begin{proof}
\ \ 

\begin{enumerate}[label=(\alph*)]
    \item \begin{sloppypar}
    It is well known that a degree sequence $\{d_1, d_2, \ldots, d_n\}$ has a multigraph realization if and only if $\sum_{i=1}^n d_i$ is even. Similarly, the bipartite degree sequence $(\{d_{1,1},d_{1,2},\ldots, d_{1,n}\}, \{d_{2,1},d_{2,2},\ldots,d_{2.m}\})$ has a bipartite multigraph realization if and only if $\sum_{i=1}^n d_{1,i} = \sum_{j=1}^m d_{2,j}$. Observe that equation~(\ref{eq:multigraph-condition}) says that the sum of the degrees in $D_{i,i}(\mathcal{S})$ is even, furthermore, equation~(\ref{eq:bipartite-multigraph-condition}) says that the sum of the degrees in the two vertex classes are the same in $D_{i,j}(\mathcal{S})$.
    \end{sloppypar}

    Let $G$ be a multigraph realization of $\mathcal{S}$. Then for each $i$, the monochromatic subgraph with color $(i,i)$ is a multigraph, and for each $i\ne j$, the monochromatic subgraph with color $(i,j)$ is a bipartite multigraph. That is, equations~(\ref{eq:multigraph-condition})~and~(\ref{eq:bipartite-multigraph-condition}) hold. 
    
    On the other hand, if equations~(\ref{eq:multigraph-condition})~and~(\ref{eq:bipartite-multigraph-condition}) hold, then there are multigraph realizations for each $D_{i,i}(\mathcal{S})$ and there are bipartite multigraph realizations for each $D_{i,j}(\mathcal{S})$. The union of them is a multigraph realization of $\mathcal{S}$.
    
    \item It is also well known that a degree sequence $\{d_1, d_2,\ldots, d_n\}$ has a loopless multigraph realization if and only if the sum of the degrees is even and 
    $$
    \max_i\{d_i\} \le \sum_{i=1}^n d_i - \max_i\{d_i\}
    $$
    Observe that equation~(\ref{eq:loopless-multigraph-condition}) describes this condition for the degree sequence $D_{i,i}(\mathcal{S})$.
    
    Let $G$ be a loopless multigraph realization of $\mathcal{S}$. Then for each $i$, the monochromatic subgraph with color $(i,i)$ is a loopless multigraph, and for each $i\ne j$, the monochromatic subgraph with color $(i,j)$ is a bipartite (loopless) multigraph. (Observe that bipartite graphs cannot contain a loop.) That is, equations~(\ref{eq:multigraph-condition}),~(\ref{eq:bipartite-multigraph-condition})~and~~(\ref{eq:loopless-multigraph-condition}) hold. 
    
    On the other hand, if equations~(\ref{eq:multigraph-condition}),~(\ref{eq:bipartite-multigraph-condition})~and~~(\ref{eq:loopless-multigraph-condition}) hold, then there are loopless multigraph realizations for each $D_{i,i}(\mathcal{S})$ and there are bipartite multigraph realizations for each $D_{i,j}(\mathcal{S})$. The union of them is a loopless multigraph realization of $\mathcal{S}$.

    \item Let $G$ be a simple graph realization of $\mathcal{S}$. Then for each color $(i,i)$, the monochromatic subgraph is a simple realization of the degree sequence $D_{i,i}(\mathcal{S})$, and for each color $(i,j)$, $i\ne j$, the monochromatic subgraph is a simple bipartite realization of the bipartite degree sequence $D_{i,j}(\mathcal{S})$. Therefore the conditions in case~\ref{case:a} as well as the first $\Delta$ Erd{\H o}s-Gallai and the first $\Delta-1$ Gale-Ryser inequalities hold.
    
    On the other hand, if the conditions in case~\ref{case:a} as well as the first $\Delta$ Erd{\H o}s-Gallai and the first $\Delta-1$ Gale-Ryser inequalities hold, then there are monochromatic simple graph realizations of each color $(i,i)$, and there are monochromatic simple bipartite graph realizations for each color $(i,j)$, $i\ne j$. Observe that the union of them is a simple graph realization of $\mathcal{S}$. Indeed, no parallel edges are possible in the union of these monochromatic graphs.
\end{enumerate}
\end{proof}

We now consider when an ensemble of labeled stub-stars has a forest realization. Given the previous theorem, we might hope that a sufficient condition is if for each $i \le j$, $D_{i,j}$ is a forest realizable degree sequence. 
However, this is not the case, as the following example demonstrates.

The example ensemble of labeled stub-stars $\mathcal{S}$ consists of four labeled stub-stars, $S_1, S_2, S_3, S_4$, respectively, with labels forming partitions
$(3,2), (3,2), (3), (2,2,1)$, respectively.


In this case, each $D_{i,i}(\mathcal{S})$ and $D_{i,j}(\mathcal{S})$ is forest realizable, but $\mathcal{S}$ has only one realization, shown below.

 \begin{center}
     
    \begin{tikzpicture}[scale = 0.5]
    \node[vertex](1) at (0,0) {\small 1};
    \node[vertex](2) at (2,4.5) {\small 2};
    \node[vertex](4) at (4,0) {\small 4};
    \node[vertex](3) at (8,0) {\small 3};
    \draw[line width = 1pt] (1) -- node[left] {(2, 2)}  (2);
    \draw[line width = 1pt] (1) -- node[below] {(2, 3)} (4) -- node[right] {(2, 3)} (2);
     \draw[line width = 1pt] (4) -- node[below] {(1, 3)}(3);
    \end{tikzpicture}
    
 \end{center} 

The issue is that unlike loops and parallel edges, cycles can occur among multiple colors, so checking each individual degree sequence
is not sufficient. This motivates the next definition.

\begin{definition}
Let $\mathcal{S} = S_1, S_2, \ldots, S_n$ be an ensemble of labeled stub-stars with maximum degree $\Delta$. For any subset $I \subseteq \{(i,i)| 1\le i\le \Delta\}\cup {\Delta \choose 2}$, we define the \emph{multichromatic degree} of $S_l$ to be $d_{I}(S_l) := \sum_{(i,j) \in I} d_{(i,j)}(S_l)$ and the \emph{multichromatic degree sequence} to be $D_{I} := d_{I}(S_1), d_{I}(S_2), \ldots, d_I(S_n)$.
\end{definition}

For example, let $I = \{(2, 2), (2, 3)\}$ for the previous example $\mathcal{S}$. Then $d_{I}(S_1) = 2, d_{I}(S_2) = 2, d_{I}(S_3) = 0,$ and $d_{I}(S_4) = 2$.

\begin{theorem}\label{theo:tree-realizations}
Let $\mathcal{S} = S_1, S_2, \ldots, S_n$ be an ensemble of labeled stub-stars with maximum degree $\Delta$. $\mathcal{S}$ is forest realizable if and only if it is multigraph realizable and for any subset $I \subseteq \{(i,i)| 1\le i\le \Delta\}\cup {\Delta \choose 2}$, the multichromatic degree sequence $(d_{I}(S_1), d_{I}(S_2), \ldots, d_{I}(S_n))$ is forest realizable.
\end{theorem}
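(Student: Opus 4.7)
For necessity, the argument is direct: if $G$ is a forest realization of $\mathcal{S}$, then $G$ is automatically a multigraph realization; and for any color subset $I\subseteq\{(i,i)\,|\,1\le i\le\Delta\}\cup\binom{\Delta}{2}$, the subgraph $G_I$ of $G$ induced by the edges whose color lies in $I$ is a subforest of $G$, whose degree at vertex $v_l$ equals the multichromatic degree $d_I(S_l)$ by definition. Hence $D_I$ is forest realizable.

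For sufficiency, my plan is to argue by induction on the total edge count $\tfrac{1}{2}\sum_l h(\lambda_l)$ via a leaf-peeling step. The base case of zero edges is trivial. For the inductive step, I would first invoke the multichromatic condition with $I_0$ equal to the full set of colors: $D_{I_0}$ is then the full degree sequence of $\mathcal{S}$ and must be forest realizable, which forces the existence of some $S_l$ with $h(\lambda_l)\in\{0,1\}$. If $S_l$ is isolated, simply remove it and apply the inductive hypothesis. Otherwise $S_l$ has a single label $k$; using multigraph realizability---in particular condition~(\ref{eq:bipartite-multigraph-condition}) applied to colors $(1,k)$---I would select a stub-star $S_{l'}$ of degree $k$ possessing a label $1$, and pair the two corresponding stubs to form the edge $v_lv_{l'}$.

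The main obstacle is constructing the reduced ensemble $\mathcal{S}'$ so that both hypotheses persist. The natural modifications are to delete $S_l$; delete one label $1$ from $S_{l'}$ and lower $h(\lambda_{l'})$ to $k-1$; and replace one occurrence of the label $k$ by $k-1$ in exactly $k-1$ further stub-stars---those that will become the remaining neighbors of $v_{l'}$ in the final forest. The real work of the proof lies in showing that a joint choice of $S_{l'}$ and of the $k-1$ ``future neighbors'' can be made so that $\mathcal{S}'$ is still multigraph realizable and every multichromatic degree sequence of $\mathcal{S}'$ remains forest realizable. The key observation that makes this plausible is that any multichromatic subset $J$ of $\mathcal{S}'$ lifts to a subset $I$ of $\mathcal{S}$ (by shifting the occurrences of degree $k-1$ on the relabeled stars back to $k$), so that the multichromatic hypothesis for $I$ in $\mathcal{S}$ implies the one for $J$ in $\mathcal{S}'$---provided the relabeling is done coherently. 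Formalising this coherence, and showing that a consistent choice of $S_{l'}$ together with its $k-1$ future neighbors always exists, is the heart of the proof and the place where all the multichromatic hypotheses are used simultaneously.
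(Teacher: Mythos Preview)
Your necessity argument is fine and matches the paper's.

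For sufficiency, however, your approach is genuinely different from the paper's and, as you yourself flag, incomplete at the crucial point. The paper does \emph{not} use leaf-peeling. Instead it takes any realization $G$ of $\mathcal{S}$ with the minimum number of connected components (such a realization exists because each $D_{i,j}(\mathcal{S})$ is in particular simple-graph realizable), assumes $G$ contains a cycle $C$, and then builds an increasing chain of color sets $I_0\subset I_1\subset\cdots$ until the color-induced subgraph $G[I_k]$ meets two distinct components of $G$. A sequence of edge swaps along this chain then merges two components of $G$, contradicting minimality. The multichromatic forest-realizability hypotheses are used only to guarantee that each $G[I_i]$ is disconnected, which is what drives the chain forward. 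No degrees ever change, so no relabelling is needed.

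Your induction, by contrast, forces you to lower the degree of $S_{l'}$ from $k$ to $k-1$ and to simultaneously relabel $k-1$ ``future neighbours'' of $v_{l'}$ from $k$ to $k-1$. This is where the plan breaks down, and I do not see how to repair it along the lines you sketch. The difficulty is structural: after relabelling, a single colour $(j,k-1)$ in the reduced ensemble $\mathcal{S}'$ mixes together edges that came from colour $(j,k-1)$ in $\mathcal{S}$ with edges that came from colour $(j,k)$, and these two contributions cannot be separated by any subset $I$ of colours of $\mathcal{S}$. So your lifting map $J\mapsto I$ is not well-defined on colour subsets, and the multichromatic hypothesis for $\mathcal{S}$ does not transfer to $\mathcal{S}'$ in the way you suggest. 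Worse, the choice of the $k-1$ future neighbours is exactly the data that the forest realization is supposed to produce, so selecting them in advance is circular. The paper's swap argument sidesteps all of this by keeping the ensemble fixed and manipulating only the realization.
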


\begin{proof} 
We first show the conditions are necessary. Let $G$ be a forest realization of $\mathcal{S}$. Then $G$ is also a multigraph realization, so $\mathcal{S}$ is multigraph realizable. For any subset of colors $I$, we define the color-induced subgraph of $G$, denoted $G[I]$, to be the subgraph consisting of all vertices of $G$ along with all edges labeled with some element of $I$. Since $G$ is a forest, then $G[I]$ is also a forest. The degree sequence of $G[I]$ is $(m_{I, 1}, m_{I, 2}, \ldots, m_{I, n})$, so this sequence must be forest realizable.

To show the conditions are sufficient, we consider a graph realization $G$ which is minimal among all realizations of $\mathcal{S}$ in the number of components.
It is easy to see that such realization exists. Indeed, since for each color the corresponding degree sequence is forest realizable, it is multigraph realizable (actually, simple graph realizable, too). Any union of monochromatic simple graphs is a simple graph. As multigraph realizations exist, and there are a finite number of them, there exists a realization with minimal number of components.

Assume for contradiction that $G$ has a cycle. Then we will produce another realization $G'$ with one fewer components. First we construct a subgraph $H_k$, then we will find a series of swap operations that lead to $G'$ with one fewer components.

Let $C = C_0$ be a cycle, and let $I_0$ be the set of colors of its edges. Let $H_0$ be $G[I_0]$. $H_0$ cannot be one component as it would contradict that $D_{I_0}$ is forest realizable. However, it can happen that $H_0$ is in one component of $G$. Therefore while $H_i$ is in one component of $G$, we construct an $H_{i+1}$ in the following way:
\begin{enumerate}
    \item For all pair of edges $(e_1,e_2)$ such that $e_1$ and $e_2$ have the same color, $e_1$ is in $C_i$ and $e_2$ is in another component of $H_i$, construct a path from $e_1$ to $e_2$. For each edge $f$ in that path whose color is not in $I_i$, add its color to $I_{i+1}$, and label $f$ by $(e_1,e_2)$. An edge might be in several paths for several $(e_1,e_2)$ pairs, in that case take any of these labels.
    \item Add all the colors in $I_i$ to $I_{i+1}$. Then $H_{i+1}$ is defined as $G[I_{i+1}]$ and $C_{i+1}$ is defined as the component of $H_{i+1}$ that contains $C_i$.
\end{enumerate}
In finite number of steps, this procedure will create an $H_k$ which spans in more than one component of $G$. Indeed, for all $i = 0, 1, \ldots, k-1$, $I_i \subset I_{i+1}$, and $G$ cannot be a connected graph as it contains a cycle and is forest realizable.

Now we are going to construct a $G'$ that has one fewer components than $G$. Let $f_{1,k}$ and $f_{2,k}$ be two edges with the same color in $I_k \setminus I_{k-1}$ such that $f_{1,k}$ is in $C_k$ and $f_{2,k}$ is not in the same component in $G$ than $f_{1,k}$. Assume that $f_{1,k} = (v_1,v_2)$, $f_2 = (v_3,v_4)$, and $v_1$ and $v_4$ have the same degree. Furthermore, let us call the two components containing $f_{1,k}$ and $f_{2,k}$ by $G_1$ and $G_2$.

Now we remove $f_{1,k}$ and $f_{2,k}$ from $G$ and add edges $f_1' = (v_1,v_3)$ and $f_2' = (v_2,v_4)$. This \emph{swap operation} creates another simple graph realization of $\mathcal{S}$ from $G$. If $f_{1,k}$ or $f_{2,k}$ is in a cycle in $G$, then the swap operation merges the two components of $G$ into one component, thus, we arrive to a $G'$ realization with one less component. Otherwise the swap operation does not change the number of components. Indeed, in that case, both removing $f_{1,k}$ and $f_{2,k}$ splits their components in $G$ into two, however, the two new edges connects two pairs.

On the other hand, consider the label $(f_{1,k-1}, f_{2,k-1})$ of $f_{1,k}$ if the swap operation does not decrease the number of components. We claim the following: if the swap operation does not decrease the number of components in $G$ then $f_{1,k-1}$ and $f_{2,k-1}$ are in different components of $G$ after the swap operation. Indeed, we know that there was a path between $f_{1,k-1}$ and $f_{2,k-1}$ before the swap operation that contained $f_{1,k}$. If there is a path between $f_{1,k-1}$ and $f_{2,k-1}$ after the swap operation, then either this path is in $G_1$, and in that case $f_{1,k}$ was in a cycle, a contradiction, or this path contains edges in $G_2$, but in that case $f_{2,k}$ was in a cycle, also a contradiction (or both).

Now we can also perform a swap operation using edges $f_{1,k-1}$ and $f_{2,k-1}$. This operation either decreases the number of components, and then we arrive to $G'$ with one fewer components, or separates $f_{1,k-2}$ and $f_{2,k-2}$, the two edges in the label of $f_{1,k-1}$. We can iterate this process since all the swap operations separating $f_{1,i}$ and $f_{2,i}$ use edges with color in $I_k \setminus I_i$, and thus do not change any edge in the paths between $f_{j,1}$ and $f_{j,2}$ for any $j < i$.

Eventually, one of the swap operations will merge two components in $G$ since $f_{0,1}$ is in the cycle $C$.

We arrived to a contradiction that $G$ had the minimum number of components. Therefore, $G$ does not have a cycle, thus it is a forest realization.
\end{proof}

Observe that the proof also provides an algorithmic construction. Take any union of monochromatic forest realizations and if it contains a loop, then the construction in the proof provides a series of swap operation decreasing the number of components in the realization. Eventually, after a finite number of swap operations, a forest realization is constructed.

Finally, we give the necessary and sufficient conditions when an ensemble of labelled stub-stars are caterpillar realizable.
\begin{theorem}\label{theo:caterpillar-realizations}
Let $\mathcal{S} = S_1, S_2, \ldots, S_n$ be an ensemble of labeled stub-stars. Then $\mathcal{S}$ has a caterpillar realization if it is forest realizable, the sum of the degrees is $2n-2$, and there is no labelled stub-star in $\mathcal{S}$ with more than two labels larger than $1$.
\end{theorem}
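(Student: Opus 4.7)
The plan is to show that the three hypotheses force any forest realization of $\mathcal{S}$ to actually be a caterpillar, so that the existence statement is immediate. The main idea is that hypothesis~(3) is precisely the caterpillar condition encoded in stub-star labels, while hypothesis~(2) upgrades ``forest'' to ``tree'' for free; there is no serious obstacle beyond making these translations carefully.

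First I would invoke hypothesis~(1) to obtain a forest realization $G$ of $\mathcal{S}$. A forest on $n$ vertices with $m$ edges has $n-m$ components, and the handshake identity combined with hypothesis~(2) gives $m = \tfrac{1}{2}\sum_k d_k = n-1$. Therefore $G$ has a single component, i.e., $G$ is a tree on $n$ vertices.

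Next I would translate hypothesis~(3) into a structural statement about $G$. By the definition of a realization, the multiset of degrees of the neighbors of $v_k$ in $G$ equals the multiset of labels of $S_k$. Since each $S_k$ has at most two labels larger than $1$, each vertex of $G$ has at most two neighbors of degree $\ge 2$, i.e., at most two non-leaf neighbors.

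Finally, I would close by the standard characterization that a tree in which every vertex has at most two non-leaf neighbors is a caterpillar. Concretely, let $T'$ be the subgraph of $G$ induced by the non-leaves. For any two non-leaves $u,v$, every internal vertex of the unique $u$--$v$ path in $G$ has two path-neighbors and hence $G$-degree at least $2$, so it is also a non-leaf; thus the $G$-path lies entirely in $T'$, and $T'$ is connected. Moreover the degree of any vertex in $T'$ is exactly its number of non-leaf neighbors in $G$, so $T'$ has maximum degree at most $2$. A connected acyclic graph of maximum degree at most $2$ is a path (allowing the empty graph and a single vertex as degenerate cases), so the non-leaves of $G$ form a path and $G$ is a caterpillar realization of $\mathcal{S}$, as required.
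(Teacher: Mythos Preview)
Your proof is correct and follows essentially the same route as the paper: take any forest realization, use the edge count $2n-2$ to conclude it is a tree, and then observe that the stub-star label condition forces every vertex to have at most two non-leaf neighbors, which makes the tree a caterpillar. The only difference is that you spell out in detail why a tree with this property is a caterpillar (via the connectedness and maximum-degree-$2$ argument for the non-leaf induced subgraph), whereas the paper simply asserts this characterization; the paper also remarks on necessity, but the theorem as stated only claims sufficiency, which is what you prove.
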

\begin{proof}
First we show that the conditions are necessary. Let $G$ be a caterpillar realization of $\mathcal{S}$. Then $\mathcal{S}$ is clearly forest-realizable, and the sum of the degrees of the labelled stub-stars is $2n-2$. Also, in a caterpillar realization, any vertex has at most two neighbors with degree larger than $1$.

Now we show that the conditions are sufficient. If $\mathcal{S}$ is forest realizable, take any forest realization of it, $G$. We show that $G$ is a caterpillar. First, $G$ is a tree since the sum of the degrees is $2n-2$. Furthermore, it is a caterpillar since there is no vertex with more than two neighbors whose degrees are larger than $1$.
\end{proof}

\section{Degree and neighbor degree constraint realizations}

In the previous section, we gave necessary and sufficient conditions when an ensemble of labeled stub-stars have realizations with given properties. In this section, we show how those results fit into an integer programming feasibility problem. Recall that an integer programming feasibility problem asks if a bunch of linear inequalities have an integer solution. Although the integer programming feasibility problem in general is NP-complete, here we will have constant number of variables and equations.

First, we give the equation system that defines the possible ensembles of labeled stub-stars whose realizations are realizations of a given degree and neighbor degree sequence.

\begin{lemma}\label{lem:df-stubstar}
Let $D= d_1, d_2,\ldots d_n$ and $F = f_1, f_2, \ldots, f_n$ be a pair of degree and neighbor degree sequences. Then $G = (V,E)$ is a realization of $(D,F)$ if and only if $G$ is a realization of $\mathcal{S} = S_1, S_2, \ldots, S_n$ satisfying the following inequality system:
\begin{eqnarray}
\sum_{\lambda \dashv f\ \wedge\ h(\lambda) = d} x_{\lambda} 
&=& y_{d,f} \label{eq:df-stubstar} \\
x_{\lambda} & \ge & 0 \label{eq:df-stubstar-non-negativity}
\end{eqnarray}
where $x_{\lambda}$ is the number of labeled stub-stars in $\mathcal{S}$ with labels $\lambda$ and $y_{d,f}$ is the number of cases when $d_i=d$ and $f_i = f$.
\end{lemma}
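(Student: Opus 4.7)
The plan is to prove the lemma by directly unpacking the two notions of ``realization'' at play: realization of the pair $(D,F)$ and realization of an ensemble $\mathcal{S}$ of labeled stub-stars. The bridge between them is the simple observation that for a labeled stub-star whose labels form the partition $\lambda$, the quantity $h(\lambda)$ is exactly the degree of the corresponding vertex and $|\lambda|$ is exactly the sum of the degrees of its neighbors. Specifying a realization of $(D,F)$ is therefore essentially equivalent to choosing, for each index $i$, a partition $\lambda_i$ with $h(\lambda_i) = d_i$ and $|\lambda_i| = f_i$, and then realizing the resulting ensemble of labeled stub-stars.

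For the forward direction I would start from a realization $G$ of $(D,F)$ and construct $\mathcal{S}$ explicitly: for each vertex $v_i$, let $S_i$ be the labeled stub-star whose $d_i$ labels are precisely the degrees of the neighbors of $v_i$ in $G$. Then $G$ is a realization of $\mathcal{S}$ by definition, and the partition $\lambda_i$ formed by the labels of $S_i$ satisfies $h(\lambda_i) = d_i$ and $|\lambda_i| = f_i$. To verify equation~(\ref{eq:df-stubstar}), I would note that both sides count the number of indices $i$ with $d_i = d$ and $f_i = f$: the right-hand side by definition of $y_{d,f}$, and the left-hand side by grouping the $S_i$'s according to which partition $\lambda$ they realize. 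Non-negativity of $x_\lambda$ is immediate since $x_\lambda$ is a count.

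For the backward direction I would start from an ensemble $\mathcal{S}$ satisfying equations~(\ref{eq:df-stubstar}) and~(\ref{eq:df-stubstar-non-negativity}), together with a realization $G$ of $\mathcal{S}$. Equation~(\ref{eq:df-stubstar}) guarantees that for every pair $(d,f)$, the number of stub-stars in $\mathcal{S}$ whose label-partition has height $d$ and size $f$ equals $y_{d,f}$, the number of indices $i$ with $(d_i, f_i) = (d, f)$. This matching of multiset cardinalities lets us permute the indexing of $\mathcal{S}$ so that $S_i$ has height $d_i$ and size $f_i$ for every $i$. Under the corresponding relabeling of the vertices of $G$, one reads off $d(v_i) = d_i$ and $\sum_{w \in \mathcal{N}(v_i)} d(w) = f_i$ directly from the stub-star definition, so $G$ is a realization of $(D,F)$.

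I do not expect a genuine obstacle: the lemma is a bookkeeping restatement rather than a substantive combinatorial fact. The only point requiring a little care is the indexing convention, since the definition of a realization of $\mathcal{S}$ pins the vertex $v_i$ to the specific stub-star $S_i$, whereas $(D,F)$-realizability is intrinsically about the multiset of stub-stars available. Equation~(\ref{eq:df-stubstar}) is precisely the condition that these two viewpoints line up, and that alignment is the whole content of the proof.
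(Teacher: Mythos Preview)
Your proof is correct and follows essentially the same approach as the paper: construct $\mathcal{S}$ from $G$ by cutting edges and labeling stubs with neighbor degrees for the forward direction, and read off the degree and neighbor-degree-sum constraints from the stub-star data for the converse. If anything, you are slightly more careful than the paper in making explicit the reindexing step needed in the backward direction; the paper's proof simply asserts that the degree sequence of $G$ is $D$ and the neighbor degree sum sequence is $F$ without dwelling on the index-matching.
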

\begin{proof}
Assume that $G = (V,E)$ is a realization of $(D,F)$. Then cutting each edge in $E$ and labeling the stubs of the so-emerging stub-stars by the degree of the neighbor vertices yields a labeled stub-star $S$ for each vertex $v$. The degree of $S$ is $d(v)$ and its labels form a partition of $f := \sum_{w\in\mathcal{N}(v)} d(w)$. Therefore any realization of $(D,F)$ is also a realization of an ensemble of labeled stub-stars that satisfies the equation system summarized in equation~\ref{eq:df-stubstar}.

Similarly, if $G = (V,E)$ is a realization of an ensemble of labeled stub-stars satisfying the equation system presented in equation~\ref{eq:df-stubstar}, then the degree sequence of $G$ is $D$, and the neighbor degree sum sequence is $F$.
\end{proof}

We can combine the equation system given in equation~\ref{eq:df-stubstar} with appropriate linear inequality systems describing the conditions (a)--(c) in Theorem~\ref{theo:multi-simple-realizations} and the conditions in Theorem~\ref{theo:tree-realizations}~and~\ref{theo:caterpillar-realizations}.

First, we start with multigraph realizations.

\begin{theorem}\label{theo:mg-realization}
Let $D= d_1, d_2,\ldots d_n$ and $F = f_1, f_2, \ldots, f_n$ be a pair of degree and neighbor degree sequences. Let $\Delta$ denote the largest degree in $D$. Then $(D,F)$ is multigraph realizable if and only if the integer programming feasibility problem $\{\mathcal{X}, \mathcal{L}\}$ has a solution, where $\mathcal{X}$ consists of all $x_{\lambda}$ variables for all partitions  $\lambda = \lambda_1, \lambda_2,\ldots, \lambda_{h(\lambda)}$ such that $h(\lambda) \le D$ and for all $i$, $\lambda_i \le \Delta$ and auxiliary variables $p_1, p_2, \ldots, p_{\Delta}$, and $\mathcal{L}$ consists of 
\begin{itemize}
    \item the inequality system given in equations~\ref{eq:df-stubstar}~and~\ref{eq:df-stubstar-non-negativity},
    \item for all $0< i< j \le \Delta$, the equation
 \begin{equation}  
     \sum_{\lambda| h(\lambda) = i} x_{\lambda} N(\lambda,j) = \sum_{\lambda'| h(\lambda') = j} x_{\lambda'} N(\lambda',i), \label{eq:mg-forced-bipartite}
 \end{equation}
     \item  and for all $i$, the equation
\begin{equation}
    \sum_{\lambda | h(\lambda) = i}x_{\lambda}N(\lambda, i) = 2p_i.\label{eq:mg-parity}
\end{equation}

\end{itemize}
\end{theorem}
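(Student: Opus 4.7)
The plan is to prove the theorem as a bridge between two results already established: Lemma~\ref{lem:df-stubstar}, which recasts $(D,F)$-realizability as realizability of an ensemble of labeled stub-stars whose $x_\lambda$ counts satisfy equation~\ref{eq:df-stubstar}; and Theorem~\ref{theo:multi-simple-realizations}(a), which gives the algebraic necessary and sufficient conditions for such an ensemble to have a multigraph realization. The theorem to be proved essentially says that the conjunction of these two sets of conditions is exactly the integer programming feasibility problem $\{\mathcal{X},\mathcal{L}\}$. So the job is to translate each condition into the form stated in $\mathcal{L}$ and verify that the introduction of the auxiliary variables $p_i$ captures the parity condition faithfully.

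For the forward direction, I would start from a multigraph realization $G$ of $(D,F)$, cut each edge into two stubs, and label each stub with the degree of the vertex on the other side. By Lemma~\ref{lem:df-stubstar} the resulting ensemble $\mathcal{S}$ satisfies equation~\ref{eq:df-stubstar}, and so setting $x_\lambda$ to the number of stub-stars in $\mathcal{S}$ with label partition $\lambda$ satisfies both equation~\ref{eq:df-stubstar} and the non-negativity constraints. Next, $G$ is in particular a multigraph realization of $\mathcal{S}$, so Theorem~\ref{theo:multi-simple-realizations}(a) supplies equations~(\ref{eq:multigraph-condition}) and~(\ref{eq:bipartite-multigraph-condition}). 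The latter is already equation~\ref{eq:mg-forced-bipartite}. The former says that the sum in equation~\ref{eq:mg-parity} is even, so I can define $p_i$ to be exactly half that sum; this is a non-negative integer and witnesses equation~\ref{eq:mg-parity}. All variables in $\mathcal{X}$ therefore receive an integer assignment satisfying $\mathcal{L}$.

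For the backward direction, I would take any integer solution to $\{\mathcal{X},\mathcal{L}\}$ and use the $x_\lambda$ values to build an ensemble $\mathcal{S}$ containing exactly $x_\lambda$ copies of the stub-star with label partition $\lambda$. Equation~\ref{eq:df-stubstar} guarantees that $\mathcal{S}$ has the correct degree and neighbor-degree-sum multiset, matching $(D,F)$. Equation~\ref{eq:mg-forced-bipartite} is equation~(\ref{eq:bipartite-multigraph-condition}), and the existence of an integer $p_i$ with equation~\ref{eq:mg-parity} forces the monochromatic sum in equation~(\ref{eq:multigraph-condition}) to be even. So Theorem~\ref{theo:multi-simple-realizations}(a) produces a multigraph realization $G$ of $\mathcal{S}$, and Lemma~\ref{lem:df-stubstar} then says that $G$ is a realization of $(D,F)$.

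There is no real obstacle beyond bookkeeping. The one thing worth being careful about is that the variable set $\mathcal{X}$ is genuinely finite and the equation system of constant size in $\Delta$: since $h(\lambda)\le\Delta$ and each $\lambda_i\le\Delta$, the number of admissible partitions $\lambda$ is bounded by a function of $\Delta$ only, so both $\mathcal{X}$ and $\mathcal{L}$ have size independent of $n$. This is what makes the later claim about polynomial-time solvability work, and I would flag it for the subsequent complexity discussion even though it is not part of the correctness argument itself.
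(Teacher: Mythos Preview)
Your proposal is correct and follows essentially the same approach as the paper's own proof: both directions invoke Lemma~\ref{lem:df-stubstar} to pass between $(D,F)$ and an ensemble $\mathcal{S}$, and Theorem~\ref{theo:multi-simple-realizations}(a) to pass between $\mathcal{S}$ and a multigraph realization, with the auxiliary $p_i$ encoding the parity condition exactly as the paper does. Your additional remark on the finiteness of $\mathcal{X}$ and $\mathcal{L}$ as functions of $\Delta$ is a welcome observation that the paper leaves implicit.
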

\begin{proof}
Assume that $(D,F)$ has a multigraph realization $G$. Decompose $G$ into labeled stub-stars $\mathcal{S}$, then $G$ is a multigraph realization of $\mathcal{S}$.

Let $x_{\lambda}$ denote the number of labeled stub-stars in $\mathcal{S}$ whose labels are $\lambda$. Furthermore, let 
$$
p_i :=  \frac{1}{2} \sum_{\lambda| h(\lambda) = i} x_{\lambda} N(\lambda, i).
$$
We claim that these assignments form a solution to $(\mathcal{X},\mathcal{L})$. Indeed, the inequality system in equations~\ref{eq:df-stubstar}~and~\ref{eq:df-stubstar-non-negativity} holds due to Lemma~\ref{lem:df-stubstar}. Equations~\ref{eq:mg-forced-bipartite}~and~\ref{eq:mg-parity} hold due to Theorem~\ref{theo:multi-simple-realizations}. Indeed, equation~\ref{eq:mg-forced-bipartite} shows that for each $i\neq j$, the degree sequence $D_{i,j}(\mathcal{S})$ has a bipartite graph realization where all edges go between center of stub-stars with degree $i$ and $j$.
%
%
Furthermore, equation~\ref{eq:mg-parity} shows that for each $i$, the sum of the
degrees in $D_{i,i}(\mathcal{S})$ is even
Therefore, if $(D,F)$ has a multigraph realization, the integer programming feasibility problem $(\mathcal{X},\mathcal{L})$ has a solution.

Now assume that $(\mathcal{X},\mathcal{L})$ has a solution. Let $\mathcal{S}$ denote the ensemble of labeled stub-stars in which the number of labeled stub-stars with labels $\lambda$ is $x_{\lambda}$. Since equations~\ref{eq:mg-forced-bipartite}~and~\ref{eq:mg-parity} hold, $\mathcal{S}$ has a multigraph realization $G$ according to Theorem~\ref{theo:multi-simple-realizations}. This realization $G$ is a realization of $(D,F)$ by Lemma~\ref{lem:df-stubstar} since the inequality system given in equations~\ref{eq:df-stubstar}~and~\ref{eq:df-stubstar-non-negativity} holds.
 \end{proof}

For loopless multigraphs, a similar theorem can be proved.
\begin{theorem}
Let $D= d_1, d_2,\ldots d_n$ and $F = f_1, f_2, \ldots, f_n$ be a pair of degree and neighbor degree sequences. Let $\Delta$ denote the largest degree in $D$. Then $(D,F)$ is loopless multigraph realizable if and only if the integer programming feasibility problem $\{\mathcal{X}, \mathcal{L}\}$ has a solution, where $\mathcal{X}$ consists of all $x_{\lambda}$ variables for all partitions  $\lambda = \lambda_1, \lambda_2,\ldots, \lambda_{h(\lambda)}$ such that $h(\lambda) \le D$ and for all $i$, $\lambda_i \le \Delta$ and auxiliary variables $p_1, p_2, \ldots, p_{\Delta}, z_1, z_2, \ldots, z_r, b_1, b_2,\ldots, b_r$, with $r= \sum_{i=1}^{\Delta} (4i-2)$ and $\mathcal{L}$ consists of 
\begin{itemize}
    \item the inequality system given in equations~\ref{eq:df-stubstar}~and~\ref{eq:df-stubstar-non-negativity},
    \item for all $0< i< j \le \Delta$, the equation
 \begin{equation}  
     \sum_{\lambda| h(\lambda) = i} x_{\lambda} N(\lambda,j) = \sum_{\lambda'| h(\lambda') = j} x_{\lambda'} N(\lambda',i), \label{eq:lmg-forced-bipartite}
 \end{equation}
     \item  and for all $i$, the equation
\begin{equation}
    \sum_{\lambda | h(\lambda) = i}x_{\lambda}N(\lambda, i) = 2p_i.\label{eq:lmg-parity}
    \end{equation}
     the integer linear programming feasibility problem defining
     \begin{equation}
         z_{s(i)} = \max_{k\in\{1,2,\ldots,i\}}\left\{\min\left\{1,\sum_{\lambda|h(\lambda) = i \wedge N(\lambda,i) = k}x_\lambda\right\}k\right\}, \label{eq:max-degree-lp}
     \end{equation}
     where $s(i)$ is the index of the auxiliary variable storing the maximum degree in the $(i,i)^{\mathrm{th}}$ row of the special color-degree matrix as well as the inequality
\begin{equation}
    2z_{s(i)} \le \sum_{\lambda | h(\lambda) = i}x_{\lambda}N(\lambda, i)\label{eq:lmg-fesability}
\end{equation}

\end{itemize}
\end{theorem}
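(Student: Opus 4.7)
The plan is to parallel the proof of Theorem~\ref{theo:mg-realization}, inserting the loopless-specific condition of Theorem~\ref{theo:multi-simple-realizations}(b) and an additional block of linear inequalities that linearizes the nested $\max$-of-$\min$ expression in equation~\ref{eq:max-degree-lp}.

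For the forward direction I start with a loopless multigraph realization $G$ of $(D,F)$, cut each edge at its midpoint, and label each resulting half-edge by the degree of the opposite endpoint to produce an ensemble $\mathcal{S}$ of labeled stub-stars. Setting $x_{\lambda}$ to the number of stub-stars whose label pattern is $\lambda$ and $p_i := \tfrac{1}{2}\sum_{\lambda\mid h(\lambda)=i} x_{\lambda}N(\lambda,i)$, the constraints~\ref{eq:df-stubstar}, \ref{eq:lmg-forced-bipartite}, and~\ref{eq:lmg-parity} follow exactly as in Theorem~\ref{theo:mg-realization}: the first by Lemma~\ref{lem:df-stubstar}, the other two because the monochromatic subgraph of color $(i,j)$ is itself a (bipartite or ordinary) multigraph. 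I then define $z_{s(i)}$ to be the maximum degree occurring in the sequence $D_{i,i}(\mathcal{S})$, and argue that this equals the right-hand side of equation~\ref{eq:max-degree-lp}: for each $k$, the inner $\min\{1,\sum x_{\lambda}\}$ equals $1$ precisely when some stub-star of height $i$ has $k$ labels equal to $i$, so the outer $\max$ selects the largest such $k$. Inequality~\ref{eq:lmg-fesability} is then exactly the loopless condition~\ref{eq:loopless-multigraph-condition} of Theorem~\ref{theo:multi-simple-realizations}(b) applied to $D_{i,i}(\mathcal{S})$.

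The reverse direction is symmetric: given a solution of $(\mathcal{X},\mathcal{L})$, I form the ensemble $\mathcal{S}$ with $x_{\lambda}$ copies of each label pattern, observe that~\ref{eq:lmg-forced-bipartite} and~\ref{eq:lmg-parity} supply the case~(a) hypotheses of Theorem~\ref{theo:multi-simple-realizations} while~\ref{eq:max-degree-lp} together with~\ref{eq:lmg-fesability} supplies condition~\ref{eq:loopless-multigraph-condition}, and conclude via Theorem~\ref{theo:multi-simple-realizations}(b) that $\mathcal{S}$ has a loopless multigraph realization. Lemma~\ref{lem:df-stubstar} then certifies that this realization has the prescribed sequences $(D,F)$.

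The main obstacle will be justifying that the $\max$-of-$\min$ expression in~\ref{eq:max-degree-lp} really is expressible as a linear feasibility subsystem using exactly the advertised number of auxiliaries $z_1,\ldots,z_r,b_1,\ldots,b_r$ with $r=\sum_{i=1}^{\Delta}(4i-2)$. For each fixed $i$ I would invoke Lemma~\ref{lem:express-min-max} once per inner $\min$, taking $M=n$ as an upper bound on the arguments (no $\sum x_{\lambda}$ can exceed the total number of stub-stars), and once per pairwise $\max$ in the standard left-associative reduction of an $i$-fold maximum, taking $M=i$ as an upper bound on argument differences. This yields $i$ inner $\min$ gadgets and $i-1$ outer $\max$ gadgets per index $i$, each contributing $2$ auxiliary variables $(z,b)$ and $6$ inequalities, totalling $4i-2$ auxiliaries per $i$ as claimed. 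A short induction on nesting depth, using Lemma~\ref{lem:express-min-max}, confirms that for any assignment of the $x_{\lambda}$ the gadget pins $z_{s(i)}$ to the intended value, so integer solutions of the augmented system are in bijection (up to the binary choices allowed by the lemma) with pairs $(\{x_{\lambda}\},\{z_{s(i)}\})$ in which each $z_{s(i)}$ equals the max in~\ref{eq:max-degree-lp}.
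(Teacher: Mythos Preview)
Your proposal is correct and follows essentially the same approach as the paper: the paper's proof reads, in full, ``Similar to the proof of Theorem~\ref{theo:mg-realization}. Observe that the linear programming described in equations~\ref{eq:max-degree-lp}~and~\ref{eq:lmg-fesability} is equivalent with the inequality in equation~\ref{eq:loopless-multigraph-condition}.'' Your expanded treatment of the $\max$-of-$\min$ linearization (the choice of $M=n$ for the inner $\min$'s, $M=i$ for the pairwise $\max$'s, and the resulting $4i-2$ auxiliary count) matches the argument the authors had in mind and left implicit.
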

\begin{proof}
Similar to the proof of Theorem~\ref{theo:mg-realization}. Observe that the linear programming described in equations~\ref{eq:max-degree-lp}~and~\ref{eq:lmg-fesability} is equivalent with the inequality in equation~\ref{eq:loopless-multigraph-condition}.
\end{proof}

In case of simple graph realizations, the first few Erd{\H o}s-Gallai and Gale-Ryser inequalities should be checked. The challenge is that the degrees in 
$D_{i,j}(\mathcal{S})$ and $D_{i,i}(\mathcal{S})$ are not ordered
so we do not know what the first $k$ highest degrees are. Fortunately, it is possible to give a dynamic programming algorithm that computes the sum of the first $k$ highest degrees.
\begin{lemma}\label{lem:dp-for-maxsum}
Let $\mathcal{S}$ be an ensemble of labeled stub-stars in which the number of labeled stub-stars with labels $\lambda$ is $x_{\lambda}$.
Fix an $i$ and $j$, and let 
$$
s_l := \sum_{\lambda|h(\lambda) = i \wedge N(\lambda,j) = l} x_{\lambda}
$$
Let $t_{l,k}$ denote the largest possible sum of at most $k$ entries in 
$D_{i,j}(\mathcal{S})$
corresponding to labeled stub-stars with degree $i$ such that each term in the sum is at most $l$. Then the following equations hold:
\begin{eqnarray}
t_{l,0} & = & 0 \label{eq:dp-for-maxsum-init}\\
t_{l,k} & = & \max_{0\le k' \le k}\left\{t_{l-1,k-k'}+l\times\min\{k',s_l\}\right\}\,\,\,\,\, \forall\ l>1 \label{eq:dp-for-maxsum}
\end{eqnarray}
\end{lemma}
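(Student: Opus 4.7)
The plan is a standard optimal-substructure argument for the dynamic program, proving the recurrence by two one-sided inequalities. The base case $t_{l,0} = 0$ is immediate: any selection of zero entries has sum zero, and zero is clearly achievable.

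For the inequality $t_{l,k} \ge \max_{0 \le k' \le k}\{t_{l-1,k-k'} + l \cdot \min\{k', s_l\}\}$, I would fix any $k'$ in the allowed range and exhibit a feasible selection realising the right-hand side. There are exactly $s_l$ entries in $D_{i,j}(\mathcal{S})$ coming from height-$i$ stub-stars with $N(\lambda,j) = l$, so we may pick $\min\{k', s_l\}$ of them, each contributing $l$ to the sum. Then append a selection of at most $k - k'$ entries of value at most $l-1$ whose sum equals $t_{l-1,k-k'}$. The combined selection uses at most $\min\{k', s_l\} + (k-k') \le k$ entries of value at most $l$ from the degree sequence, its total is $l \cdot \min\{k', s_l\} + t_{l-1,k-k'}$, and so this quantity lower-bounds $t_{l,k}$.

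For the reverse inequality, take an optimal selection $T$ achieving $t_{l,k}$ and let $m$ be the number of entries in $T$ of value exactly $l$. Then $m \le s_l$ and $m \le |T| \le k$, while the remaining $|T| - m \le k - m$ entries of $T$ all have value at most $l-1$ and hence contribute at most $t_{l-1,k-m}$ in total. So the sum of $T$ is at most $l m + t_{l-1,k-m}$; setting $k' = m$ (and using $\min\{m, s_l\} = m$) produces exactly this quantity on the right-hand side, proving $t_{l,k} \le \max_{0 \le k' \le k}\{\cdots\}$.

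I do not expect a serious obstacle. The only subtlety is that the definition of $t_{l,k}$ allows \emph{at most} $k$ entries rather than exactly $k$, which is what makes both directions go through cleanly: on the lower-bound side, padding with fewer entries is permitted, and on the upper-bound side, $t_{l-1,\cdot}$ is monotone non-decreasing in its second argument, so the residual budget $k-m$ never hurts. The same monotonicity shows that the terms with $k' > s_l$, where $\min\{k', s_l\} = s_l$, are always dominated by the $k' = s_l$ term, so the max is effectively taken over $k' \in \{0, 1, \ldots, \min\{k, s_l\}\}$; this is worth mentioning but needs no extra argument.
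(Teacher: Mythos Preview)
Your proof is correct and follows essentially the same two-inequality optimal-substructure argument as the paper: the paper also verifies the base case trivially and then proves the recurrence by bounding each side, splitting an optimal selection according to how many of its terms equal $l$ (the paper's $\tilde{k}$ is your $m$) for one direction and exhibiting a feasible selection for the other. Your write-up is in fact a bit cleaner, since the paper's phrase ``contains $\tilde{k}$ terms that are smaller than $l$'' appears to be a slip for ``equal to $l$'', which you avoid.
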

\begin{proof}
Equation~\ref{eq:dp-for-maxsum-init} is trivial. We prove equation~\ref{eq:dp-for-maxsum} by proving two inequalities. To prove that the left hand side is smaller than or equal the right hand side, consider the sum that maximizes $t_{l,k}$. Assume that it contains $\tilde{k}$ terms that are smaller than $l$. Then
$$
t_{l,k} \le t_{l-1,k-\tilde{k}}+l\times\min\{\tilde{k},s_l\}
$$
since the sum of the other $k-\tilde{k}$ terms cannot be larger than $t_{l-1,k-\tilde{k}}$. Thus,
$$
t_{l,k} \le t_{l-1,k-\tilde{k}}+l\times\min\{\tilde{k},s_l\}\le \max_{0\le k' \le k}\left\{t_{l-1,k-k'}+l\times\min\{k',s_l\}\right\}.
$$
To prove that the left hand side is greater than or equal the right hand side, consider the $\tilde{k}$ that maximizes the right hand side, and consider the sum that maximizes $t_{l-1,k-\tilde{k}}$. Add $\min\{\tilde{k},s_l\}$ times $l$ to this sum, then we get a sum that contains at most $k$ terms, the largest term is at most $l$, and its value is $t_{l-1,k-k'}+l\times\min\{k',s_l\}$. It cannot be larger than $t_{l,k}$, thus we get that
$$
t_{l,k} \ge t_{l-1,k-\tilde{k}}+l\times\min\{\tilde{k},s_l\}= \max_{0\le k' \le k}\left\{t_{l-1,k-k'}+l\times\min\{k',s_l\}\right\}.
$$
\end{proof}

For sake of readability, we omitted indexes $i$ and $j$.
We need the same dynamic programming algorithm for each $D_{i,j}(\mathcal(S))$, and we will denote the variables in equation~\ref{eq:dp-for-maxsum} by $t_{l,k}^{i,j}$.

In the Erd{\H o}s-Gallai inequalities, we also need to compute $\sum_{h=k+1}^n \min\{k,d_h\}$. We can define the degree sequence $\tilde{D}$ as $\tilde{d}_h := \min\{k,d_h\}$, and then 
$$
\sum_{h=k+1}^n \min\{k,d_h\} = \sum_{g=1}^n \tilde{d}_g - \tilde{t}_{k}
$$
where $\tilde{t}_k$ is the sum of the at most $k$ larges degrees in $\tilde{D}$. This can be obtained by the dynamic programming recursion
\begin{eqnarray}
\tilde{t}_{l,0} & = & 0 \label{eq:tdp-for-maxsum-init}\\
\tilde{t}_{l,k} & = & \max_{0\le k' \le k}\left\{\tilde{t}_{l-1,k-k'}+\min\{l,k\}\times\min\{k',s_l\}\right\}\,\,\,\,\, \forall\ l>1, \label{eq:tdp-for-maxsum}
\end{eqnarray}
and then defining $\tilde{t}_k := \tilde{t}_{i,k}$
The correctness of the equations~\ref{eq:tdp-for-maxsum-init}~and~\ref{eq:tdp-for-maxsum} can be proved similarly to the proof of Lemma~\ref{lem:dp-for-maxsum}. We need the same dynamic programming algorithm for each $D_{i,i}(\mathcal{S})$, and we will denote the variables in the equation~\ref{eq:tdp-for-maxsum} by $\tilde{t}^i_{l,k}$.

Now we are ready to prove the theorem on simple graph realizations.
\begin{theorem}
Let $D= d_1, d_2,\ldots d_n$ and $F = f_1, f_2, \ldots, f_n$ be a pair of degree and neighbor degree sequences. Let $\Delta$ denote the largest degree in $D$. Then $(D,F)$ is simple graph realizable if and only if the integer programming feasibility problem $\{\mathcal{X}, \mathcal{L}\}$ has a solution, where $\mathcal{X}$ consists of all $x_{\lambda}$ variables for all partitions  $\lambda = \lambda_1, \lambda_2,\ldots, \lambda_{h(\lambda)}$ such that $h(\lambda) \le D$ and for all $i$, $\lambda_i \le \Delta$ and auxiliary variables $p_1, p_2, \ldots, p_{\Delta}, t_{l,k}^{i,j}$ for all $1\le i\le j\le \Delta$ and $1\le k,l \le \Delta$,  $\tilde{t}_{l,k}^{i}$ for all $1\le i\le \Delta$ and $1\le k,l \le \Delta$, $z_1, z_2, \ldots, z_r, b_1, b_2,\ldots, b_r$, with $r= O(\Delta^5)$ and $\mathcal{L}$ consists of 
\begin{itemize}
    \item the inequality system given in equations~\ref{eq:df-stubstar}~and~\ref{eq:df-stubstar-non-negativity},
    
    \item for all $1\le i\le j\le \Delta$, the integer linear programming feasibility problem defining the dynamic programming recursions in equation~\ref{eq:dp-for-maxsum}, and for all $i$, the integer linear programming feasibility problem defining the dynamic programming recursions in equation~\ref{eq:tdp-for-maxsum},
    
    \item for all $1\le i< j \le \Delta$, the equation
 \begin{equation}  
     \sum_{\lambda| h(\lambda) = i} x_{\lambda} N(\lambda,j) = \sum_{\lambda'| h(\lambda') = j} x_{\lambda'} N(\lambda',i), \label{eq:sg-forced-bipartite}
 \end{equation}
     as well as the linear inequality system for the first $\Delta-1$ Gale-Ryser inequalities presented as
     \begin{equation}
         t_{i,k}^{i,j} \le \sum_{\lambda |h(\lambda) = j} x_{\lambda} \min\{k,N(\lambda,i)\}
     \end{equation}

     \item  and for all $i$, the equation
\begin{equation}
    \sum_{\lambda | h(\lambda) = i}x_{\lambda}N(\lambda, i) = 2p_i.\label{eq:sg-parity}
    \end{equation}
   as well as the first $\Delta$ Erd{\H o}s-Gallai inequalities expressed as
   \begin{equation}
       t_{i,k}^{i} \le k(k+1) + \left(\sum_{\lambda | h(\lambda) = i} x_{\lambda} \min\{k,N(\lambda,i)\}\right) - \tilde{t}_{i,k}^{i}.
   \end{equation}
\end{itemize}
\end{theorem}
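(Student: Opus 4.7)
The plan is to mirror the structure of Theorem~\ref{theo:mg-realization}, but to layer in the Erd{\H o}s-Gallai and Gale-Ryser conditions of Theorem~\ref{theo:multi-simple-realizations}(c). The conceptual novelty is that the left-hand sides of those inequalities involve top-$k$ sums of the (unordered) chromatic degree sequences, and such sums are not directly expressible as linear functions of the $x_\lambda$. Lemma~\ref{lem:dp-for-maxsum} together with its capped analogue defining $\tilde t$ shows that the top-$k$ sums can be computed by dynamic programming recursions whose individual steps are themselves encodable as linear systems via Lemma~\ref{lem:express-min-max}. Thus the auxiliary variables $t_{l,k}^{i,j}$, $\tilde{t}_{l,k}^{i}$ together with the $z,b$ companions are precisely what is needed to turn Erd{\H o}s-Gallai and Gale-Ryser into linear inequalities inside $\mathcal{L}$.

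For the forward direction, suppose $G$ is a simple graph realization of $(D,F)$. Cut each edge of $G$ into two labeled stubs, each labeled by the degree of the opposite endpoint, producing an ensemble $\mathcal{S}$; let $x_\lambda$ be the number of stub-stars with label partition $\lambda$ and set $p_i := \tfrac{1}{2}\sum_{\lambda\mid h(\lambda)=i} x_\lambda N(\lambda,i)$. Define the $t_{l,k}^{i,j}$ and $\tilde{t}_{l,k}^{i}$ by the recursions~\ref{eq:dp-for-maxsum-init}--\ref{eq:dp-for-maxsum} and~\ref{eq:tdp-for-maxsum-init}--\ref{eq:tdp-for-maxsum}, and choose the $z,b$ variables as prescribed by Lemma~\ref{lem:express-min-max} to encode each internal $\max$/$\min$ of the DP. Equations~\ref{eq:df-stubstar}--\ref{eq:df-stubstar-non-negativity} hold by Lemma~\ref{lem:df-stubstar}; equations~\ref{eq:sg-forced-bipartite} and~\ref{eq:sg-parity} hold exactly as in the multigraph case because the monochromatic subgraphs of $G$ have the prescribed chromatic degree sequences. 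Since $G$ is simple, each monochromatic subgraph of color $(i,j)$ with $i\ne j$ is simple bipartite and satisfies Gale-Ryser, and each color $(i,i)$ subgraph is simple and satisfies Erd{\H o}s-Gallai. By Lemma~\ref{lem:dp-for-maxsum}, $t_{i,k}^{i,j}$ coincides with the sum of the $k$ largest entries in the degree-$i$ half of $D_{i,j}(\mathcal{S})$, so Gale-Ryser rewrites as the stated inequality; analogously $\tilde{t}_{i,k}^{i}$ is the top-$k$ sum of the capped sequence $\{\min\{k,N(\lambda,i)\}\}$, and the identity $\sum_{h>k}\min\{k,d_h\}=\sum_h\min\{k,d_h\}-\tilde{t}_{i,k}^{i}$ converts Erd{\H o}s-Gallai into the stated form.

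For the reverse direction, suppose $(\mathcal{X},\mathcal{L})$ has an integer solution. Form the ensemble $\mathcal{S}$ consisting of $x_\lambda$ copies of the stub-star with label partition $\lambda$. The DP constraints in $\mathcal{L}$ force $t_{i,k}^{i,j}$ and $\tilde{t}_{i,k}^{i}$ to be exactly the top-$k$ quantities of Lemma~\ref{lem:dp-for-maxsum}, so the $\mathcal{L}$-inequalities translate back into the genuine first $\Delta-1$ Gale-Ryser inequalities for each $D_{i,j}(\mathcal{S})$ and the first $\Delta$ Erd{\H o}s-Gallai inequalities for each $D_{i,i}(\mathcal{S})$. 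Combined with~\ref{eq:sg-forced-bipartite} (bipartite balance) and~\ref{eq:sg-parity} (parity), all three conditions of Theorem~\ref{theo:multi-simple-realizations}(c) are met, so $\mathcal{S}$ admits a simple graph realization $G$; by Lemma~\ref{lem:df-stubstar} together with~\ref{eq:df-stubstar}, $G$ is a simple graph realization of $(D,F)$.

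The main obstacle is the careful bookkeeping for the linearization of the DP: one must check that valid bounds $M$ on the internal differences (crudely $n$ or $\Delta$) are available so that Lemma~\ref{lem:express-min-max} applies at every $\max$/$\min$, and that the cascade of auxiliary variables collapses in both directions, so that in the forward direction we can \emph{realize} the DP-prescribed values of $t$ and $\tilde t$ with a consistent choice of $z,b$, while in the reverse direction $\mathcal{L}$ \emph{forces} those variables to equal the DP output and hence the actual top-$k$ sums. Once this calibration is done, the rest of the argument reduces to plugging Theorem~\ref{theo:multi-simple-realizations}(c) and Lemma~\ref{lem:df-stubstar} together exactly as in Theorem~\ref{theo:mg-realization}.
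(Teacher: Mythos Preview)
Your proposal is correct and follows essentially the same approach as the paper, whose own proof consists only of the sentence ``Similar to the proof of Theorem~\ref{theo:mg-realization}.'' You have faithfully unpacked that sentence: the forward/reverse bijection between realizations of $(D,F)$ and solutions of $(\mathcal{X},\mathcal{L})$ via Lemma~\ref{lem:df-stubstar} and Theorem~\ref{theo:multi-simple-realizations}(c), with the top-$k$ sums handled by Lemma~\ref{lem:dp-for-maxsum} and its capped variant, and each $\max/\min$ in the DP linearized via Lemma~\ref{lem:express-min-max}.
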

\begin{proof}
Similar to the proof of Theorem~\ref{theo:mg-realization}.
\end{proof}

The theorem for tree realizations do not need inequalities with minimums and/or maximums.

\begin{theorem}
Let $D= d_1, d_2,\ldots d_n$ and $F = f_1, f_2, \ldots, f_n$ be a pair of degree and neighbor degree sequences. Let $\Delta$ denote the largest degree in $D$. Then $(D,F)$ is tree realizable if and only if the integer programming feasibility problem $\{\mathcal{X}, \mathcal{L}\}$ has a solution, where $\mathcal{X}$ consists of all $x_{\lambda}$ variables for all partitions  $\lambda = \lambda_1, \lambda_2,\ldots, \lambda_{h(\lambda)}$ such that $h(\lambda) \le D$ and for all $i$, $\lambda_i \le \Delta$ and auxiliary variables $p_1, p_2, \ldots, p_{\Delta}$ and $\mathcal{L}$ consists of 
\begin{itemize}
    \item the inequality system given in equations~\ref{eq:df-stubstar}~and~\ref{eq:df-stubstar-non-negativity},

    \item for all $1\le i< j \le \Delta$, the equation
 \begin{equation}  
     \sum_{\lambda| h(\lambda) = i} x_{\lambda} N(\lambda,j) = \sum_{\lambda'| h(\lambda') = j} x_{\lambda'} N(\lambda',i), \label{eq:t-forced-bipartite}
 \end{equation}

     \item  for all $i$, the equation
\begin{equation}
    \sum_{\lambda | h(\lambda) = i}x_{\lambda}N(\lambda, i) = 2p_i.\label{eq:t-parity}
    \end{equation}
  
     item and for all subsets $\mathcal{I} \subseteq \{(i,j) | 1\le i\le j\le \Delta \}$, the inequality
     \begin{eqnarray}
         &&\sum_{(i,j) \in \mathcal{I}|i\neq j}\left( \sum_{\lambda | h(\lambda) = i} x_{\lambda} N(\lambda,j) + \sum_{\lambda | h(\lambda) = j} x_{\lambda} N(\lambda,i) \right)  +
         \sum_{(i,i) \in \mathcal{I}} \sum_{\lambda | h(\lambda) = i} x_{\lambda} N(\lambda,i)
         \le \nonumber 
         \\&&2 
        \sum_{\lambda} x_{\lambda} \min\left\{1,\sum_{j|(h(\lambda),j) \in \mathcal{I}}N(\lambda,j)\right\} 
        %
         - 2. 
     \end{eqnarray}
     
\end{itemize}
Furthermore, $(D,F)$ has a caterpillar realization if a solution exists with
\begin{equation}
    x_{\lambda} = 0
\end{equation}
for all $\lambda$ for which $h(\lambda) >1$ and $\sum_{j>1} N(\lambda,j) \le 2$.
\end{theorem}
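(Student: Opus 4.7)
The plan is to proceed exactly in the template established by Theorem~\ref{theo:mg-realization}: reduce the question to an ensemble of labeled stub-stars via Lemma~\ref{lem:df-stubstar}, apply Theorem~\ref{theo:tree-realizations} to translate tree realizability into the multichromatic forest condition, and finally invoke Theorem~\ref{theo:caterpillar-realizations} for the caterpillar refinement. For the forward direction, I would take a tree realization $G$ of $(D,F)$, cut every edge into stubs labeled by the neighbor degrees, set $x_\lambda$ to be the number of vertices whose label partition is $\lambda$, and put $p_i := \tfrac12\sum_{\lambda|h(\lambda)=i} x_\lambda N(\lambda,i)$. Equations~(\ref{eq:df-stubstar})--(\ref{eq:df-stubstar-non-negativity}) hold by Lemma~\ref{lem:df-stubstar}, and (\ref{eq:t-forced-bipartite})--(\ref{eq:t-parity}) hold exactly as in the multigraph case. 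For the multichromatic inequality, I would observe that for every $\mathcal{I}$ the color-induced subgraph $G[\mathcal{I}]$ inherits forestness from $G$, so if $m_{\mathcal{I}}$ is the number of vertices of $G$ with at least one incident edge colored in $\mathcal{I}$, then $|E(G[\mathcal{I}])| \le m_{\mathcal{I}}-1$; multiplying by two gives the inequality, once one recognizes that the LHS is $\sum_v d_\mathcal{I}(v) = 2|E(G[\mathcal{I}])|$ (both endpoints of each edge contributing), and that $\min\{1,\sum_{j|(h(\lambda),j)\in\mathcal{I}}N(\lambda,j)\}$ is the indicator that a stub-star has at least one stub participating in a color of $\mathcal{I}$, so the RHS is $2m_\mathcal{I}-2$.

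For the backward direction, given an IP solution, I would let $\mathcal{S}$ contain $x_\lambda$ copies of each partition. Equations~(\ref{eq:t-forced-bipartite}) and (\ref{eq:t-parity}) together with Theorem~\ref{theo:multi-simple-realizations}(a) give multigraph realizability of $\mathcal{S}$. To apply Theorem~\ref{theo:tree-realizations} it remains to show that each multichromatic degree sequence $D_\mathcal{I}(\mathcal{S})$ is forest realizable. Reading the multichromatic inequality in the other direction, the LHS is $\sum_v d_\mathcal{I}(v)$ and the RHS is $2m_\mathcal{I}-2$, so the inequality asserts $\sum_v d_\mathcal{I}(v) \le 2(m_\mathcal{I}-1)$; combined with the even-sum property ensured by the parity/bipartite-balance conditions (each monochromatic component of the sum is even), this is the classical necessary and sufficient criterion for a nonnegative integer sequence to be realizable as a forest on its $m_\mathcal{I}$ nonzero entries. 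Hence Theorem~\ref{theo:tree-realizations} yields a forest realization $G$ of $\mathcal{S}$. Since the total degree $\sum_i d_i$ is determined by the $y_{d,f}$ inputs and equals $2(n-1)$ whenever $(D,F)$ can possibly be tree realizable (and is forced by the $\mathcal{I}=\binom{\Delta}{2}\cup\{(i,i)\}$ inequality to satisfy $\sum d_i\le 2n-2$), the forest $G$ has exactly $n-1$ edges on $n$ vertices and is therefore a tree, and by Lemma~\ref{lem:df-stubstar} it is a realization of $(D,F)$.

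For the caterpillar part, Theorem~\ref{theo:caterpillar-realizations} says that a forest-realizable ensemble with total degree $2n-2$ yields a caterpillar iff no stub-star carries more than two labels larger than $1$. The additional constraint $x_\lambda = 0$ for every $\lambda$ with $h(\lambda) > 1$ and more than two labels exceeding $1$ eliminates precisely those partitions, so the tree constructed above automatically satisfies the caterpillar hypothesis and the construction goes through.

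The bulk of the work lies in verifying that the multichromatic inequality really encodes forest realizability of $D_\mathcal{I}(\mathcal{S})$, i.e.\ the double-counting argument that the LHS equals the sum of multichromatic degrees and the $\min\{1,\cdot\}$ term equals the number of non-isolated vertices in the color-induced subgraph; everything else is bookkeeping that mirrors the proof of Theorem~\ref{theo:mg-realization}.
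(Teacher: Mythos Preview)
Your proposal is correct and follows precisely the template the paper itself invokes: its entire proof reads ``Similar to the proof of Theorem~\ref{theo:mg-realization},'' and you have spelled out that similarity in full, correctly identifying that the multichromatic inequality encodes the forest-realizability criterion $\sum_l d_{\mathcal I}(S_l)\le 2(m_{\mathcal I}-1)$ via the double-counting you describe, and then chaining Lemma~\ref{lem:df-stubstar}, Theorem~\ref{theo:tree-realizations}, and Theorem~\ref{theo:caterpillar-realizations}. Your sketch is in fact more detailed than what the paper provides.
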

\begin{proof}
Similar to the proof of Theorem~\ref{theo:mg-realization}.
\end{proof}

%

\section{Simulation results}

In order to demonstrate the practical implications of the presented results, we devised a set of experiments which show the potential of our approach in certain types of applications.
Furthermore, we were also able to show where the limitations of this approach were. 
Since the original motivation for this works comes from chemistry, we considered evaluating the reconstruction of saturated hydrocarbons.

We wanted to see two main results 1) how fast we can find one solution (i.e., one tree from a given input sequence) and 2) how fast the number of solutions grows with the size of the input.
We constructed two different testsets for each of these two goals.
For the first goal we generated trees of sizes 100 to 1000 with a step of 100. For the second goal we generated testsets of sizes only from 10 to 80 with a step of 10 (for reasons that will be obvious from the results). For all sizes we generated 100 trees and the obtained results are all averages (average times and average counts) for these 100 measurements.

Each tree is generated with a simple recursive procedure described below. The procedure generates a random tree with maximal degree $k$ such that for the current vertex,
\begin{enumerate}
    \item it generates a random degree $d$ from $[1\ldots k-1]$ (except for the first node where the interval is $[1\ldots k]$) uniformly at random,
    \item it chooses a random partition of $n-1$ into $d$ pieces, i.e., choose how large each of the $d$ subtrees will be ,
    \item then it recursively repeats the process for all $d$ neighbors until reaching the leaves.
\end{enumerate}

In order to harness the best practices in integer programming, we used a state-of-the-art solver and used it to construct feasible color-degree matrices.
Our goal was to use out-of-the-box IP solvers and evaluate its performance by varying the size of the problem. Since in chemical applications the input represents a feasible tree (i.e. we know there exists a feasible solution) we also start with a tree, decompose it into a degree sequence and a sequence of sums of neighbor degrees.

 We used an openly accessible SCIP solver~\cite{scip} and in order to generate a specific integer program we used the ZIMPL language~\cite{zimpl}. The final trees were reconstructed with the custom written program in python.


The main two results are presented in Figure~\ref{fig:timing} and Figure~\ref{fig:numsols}.  Figure~\ref{fig:timing} shows that the time required to find one feasible solution does not increase in the range that we tested it. It is not surprising that neither the number of variables nor the number of inequalities in the integer programming depends on the input size. In the indicated range of input size, most of the time is spent on solving the integer programming problem and only a minor part of the running time of the solver is spent on actually building the tree. 

Figure~\ref{fig:numsols} shows the exponential growth of the average number of feasible solutions to the given input. 
Already with $n=30$ carbon atoms, the average number of solutions to a random input approaches $10$ and with $n=40$ carbon atoms, the average number of solutions is about $86$. The average number of solutions keeps growing exponentially with the number of carbon atoms, indicating that it is not feasible to reconstruct large hydrocarbons using only NMR data and then testing each potential solution.

In chemistry however, besides the information about the neighborhood, other information could be used. Some of those chemical and physical limitations also infer various structural limitations which need to be further explored. 
Namely, completely unrestricted trees yield chemically unfeasible compounds so further work needs to be focused on trees with special structures so that the exponential growth of the number of solutions is postponed even further thus making this approach practical for even larger compounds.

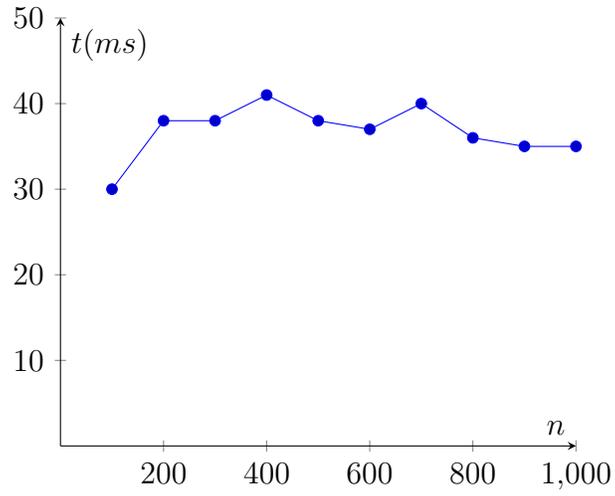
\begin{figure}
    \centering
    \begin{tikzpicture}
\begin{axis}[
    axis lines = middle,
    xlabel = {$n$},
    ylabel = {$t(ms)$},
    xmin=0, xmax=1000,
    ymin=0, ymax=50]
 
\addplot coordinates {
( 100, 30 )
( 200, 38 )
( 300, 38 )
 ( 400, 41 )
 ( 500, 38 )
 ( 600, 37 )
 ( 700, 40 )
 ( 800, 36 )
 ( 900, 35 )
 ( 1000, 35 )
};
\end{axis}  
\end{tikzpicture}

    \caption{The average time ($t$, measured in milliseconds) needed to generate one feasible solution of a tree with $n$ vertices, maximum degree $4$ and prescribed degree and sum of neighbor degree sequences. See text for detail about generating input trees and finding a solution.}
    \label{fig:timing}
\end{figure}

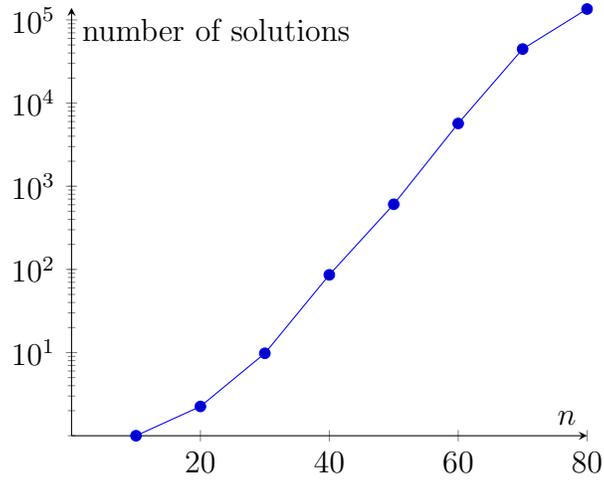
\begin{figure}
    \centering
    \begin{tikzpicture}
        \begin{axis}[
        axis lines = middle,
        ymode=log,
        xlabel = {$n$},
        ylabel = {number of solutions},
        xmin=0, xmax=80,
        ymin=0, ymax=140000]

        \addplot coordinates {
        ( 10, 1 )
        ( 20, 2.25 )
        ( 30, 9.8 )
        ( 40, 86.05 )
        ( 50, 607 )
        ( 60, 5686 )
        ( 70, 44676 )
        ( 80, 135289 )
        };
        \end{axis}  
    \end{tikzpicture}

    \caption{The average number of solutions of a (degree, sum of neighbor degree) sequence of length $n$. See text for detail.}
    \label{fig:numsols}
\end{figure}


\section{Discussion}

In this paper, we considered the problem of constructing a bounded degree graph with prescribed degree and neighbor degree sequence. The key is to find an appropriate ensemble of labeled stub-stars that can realize the given degree and neighbor degree sequence. We talked about monochromatic subgraphs that considers only stubs with given labels incident to vertices with given degrees. This approach highly resembles to the color-degree matrix problem that is also intensively studied \cite{betzetal,bushetal,guinezetal,h-mcdc}.

Indeed, an ensemble of labeled stub-stars, $\mathcal{S}= \{S_1, S_2, \ldots, S_n\}$, can be represented by a \emph{special color-degree matrix} $M$. $M$ has ${\Delta \choose 2} + \Delta$ rows and $n$ columns, where $\Delta$ is the maximum degree in $\mathcal{S}$. The rows are labeled by $(i,j) \in {\Delta\choose 2}$ and $(i,i) \in \Delta^2$. When $(i,j)$ is the label of the $k^{\mathrm{th}}$ row, then 
$$
m_{k,l} = \begin{cases}
\mathrm{number\  of \ } j \mbox{\ labels\ of\ } S_l & \mathrm{if\ } S_l \mathrm{\ has\ degree\ } i  \\
\mathrm{number\  of \ } i \mbox{\ labels\ of\ } S_l & \mathrm{if\ } S_l \mathrm{\ has\ degree\ } j  \\
0 & \mathrm{otherwise}
\end{cases}
$$
We call the $(i,j)$ and $(i,i)$ labels \emph{colors}. We will index the rows of $M$ by their color indexes, that is, we will talk about the $(i,j)^{\mathrm{th}}$ row and we index entries as $m_{(i,j),l}$.

Color-degree matrices appeared earlier in the scientific literature, see for example \cite{h-mcdc}. The color-degree matrices introduced here are special in two ways: first, for any two rows with indexes $(i,j)$ and $(i',j')$ such that $i\notin \{i',j'\}$ and $j\notin \{i',j'\}$ and for any column index $l$, $m_{(i,j),l} \neq 0$ implies that $m_{(i',j'),l} = 0$. Second, we consider only special realizations of these special color-degree matrices.

Indeed, observe that not all edge disjoint monochromatic realizations of the degree sequences in the rows of $M$ are realizations of the corresponding ensemble of stub-stars. For each $i\neq j$, it is required that the monochromatic realization be a \emph{forced} bipartite realization, that is, we require that each edge goes between prescribed vertex classes. 

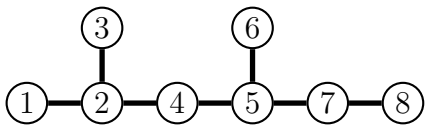
\begin{figure}[htb]
    \centering
    \begin{tikzpicture}
        \node[vertex](1) at (1,0) {1};
        \node[vertex](2) at (2,0) {2};
        \node[vertex](3) at (2,1) {3};
        \node[vertex](4) at (3,0) {4};
        \node[vertex](5) at (4,0) {5};
        \node[vertex](6) at (4,1) {6};
        \node[vertex](7) at (5,0) {7};
        \node[vertex](8) at (6,0) {8};
        \draw[line width = 2pt] (1) --(2) --(4) --(5)--(7) --(8) (2)--(3) (5)--(6);
    \end{tikzpicture}
        \caption{Caterpillar counterexample $C_1$. See text for details.}
        \label{fig:3232cat}
\end{figure}

To see why this is necessary, consider the caterpillar $C_1$, shown in Figure~\ref{fig:3232cat}. Its color-degree matrix will have the form 
\[
M=
\begin{matrix}
$(1,2)$\\
$(1,3)$\\
$(2,3)$\\
$(1,1)$\\
$(2,2)$\\
$(3,3)$
\end{matrix}
\begin{pmatrix}
0&0&0&0&0&0&1&1\\
1&2&1& 0 & 1 & 1 & 0 & 0\\
0&1&0&2&2&0&1&0\\
0&0&0&0&0&0&0&0\\
0&0&0&0&0&0&0&0\\
0&0&0&0&0&0&0&0
\end{pmatrix}.
\]
Notice that the 
plain, dashed, dotted subgraphs in Figure~\ref{fig:3322cat}, respectively, are edge disjoint realizations for the $(1,2)$-th, $(1,3)$-th, and $(2,3)$-th rows of $M$, respectively. However, their union is $C_2$ which is not a realization of $M$ because $C_2$ has $(2,2)$ and $(3,3)$ edges. Also observe that the dotted subgraph in Figure~\ref{fig:3322cat} is a bipartite graph, however, it is not a forced bipartite graph.
\begin{figure}[htb]
    \centering
    \begin{tikzpicture}
        \node[vertex](1) at (1,0) {1};
        \node[vertex](2) at (2,0) {2};
        \node[vertex](3) at (2,1) {3};
        \node[vertex](4) at (4,0) {4};
        \node[vertex](5) at (3,0) {5};
        \node[vertex](6) at (3,1) {6};
        \node[vertex](7) at (5,0) {7};
        \node[vertex](8) at (6,0) {8};
        \draw[line width = 2pt, dashed] (1) --(2) --(3) (5)--(6);
        \draw[line width = 1pt, dotted] (2)--(5)--(4)--(7);
        \draw[line width = 2pt] (7)--(8);
    \end{tikzpicture}
        \caption{Caterpillar counterexample $C_2$.}
        \label{fig:3322cat}
\end{figure}
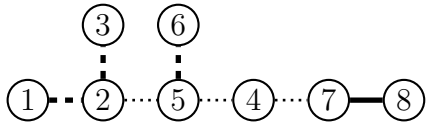

In general, it is NP-complete to decide if a color-degree matrix has any realization \cite{durretal,gardneretal}. On the other hand, Carrol and Isaak showed that the problem is easy if each row if the column sums form a degree sequence of a forest \cite{ci2009}. Hillebrand and McDiarmid showed that the problem remains easy if the column sums form a degree sequence that has a realization with at most one cycle \cite{h-mcdc}. These results cannot be directly applied on our problem since we require special realizations as discussed above.

Nonetheless, it is worth mentioning that the hard part of the degree and neighbor degree realization problem is to find the appropriate ensemble of stub-stars. Indeed, it remains easy to decide if a special color-degree matrix has a simple graph realization even if there is no bound on the maximal degree: due to the speciality of the color-degree matrices obtained from ensemble of stub-stars, it is guaranteed that the union of monochromatic simple graph realizations will remain simple graphs (given that the realizations for $(i,j)$ colors, $i\ne j$ are forced bipartite).

\section*{Acknowledgement}
I.M. was supported by NKFIH grants KH126853, K132696 and SNN135643. The project is a continuation of the work done at the 2020 Budapest Semesters in Mathematics. All authors would like to thank the BSM for running the program.

\bibliographystyle{elsarticle-harv}

\end{document}